\DeclareMathOperator{\SL}{SL}
\DeclareMathOperator{\meas}{meas}
\newcommand{\A}{\mathcal{A}}
\newcommand{\F}{\mathcal{F}}
\newcommand{\h}{\mathcal{H}}
\newcommand{\N}{\mathbb{N}}
\newcommand{\R}{\mathbb{R}}
\newcommand{\Z}{\mathbb{Z}}
\newcommand{\eps}{\varepsilon}
\newtheorem{thm}{Theorem}
\newtheorem{alg}[thm]{Algorithm}
\newtheorem{conj}[thm]{Conjecture}
\newtheorem{Def}[thm]{Definition}
\newtheorem{lem}[thm]{Lemma}
\newtheorem{rem}[thm]{Remark}
\begin{document}
\title[Maass forms and fluctuations in the Weyl remainder]{Large sets of
  consecutive Maass forms and fluctuations in the Weyl remainder}
\author[H.~Then]{Holger Then}
\address{University of Bristol, Department of Mathematics, University Walk,
  Bristol BS8 1TW, United Kingdom.}

\begin{abstract}
  We explore an algorithm which systematically finds all discrete
  eigenvalues of an analytic eigenvalue problem.
  The algorithm is more simple and elementary as could be expected before.
  It consists of Hejhal's identity, linearisation, and Turing bounds.
  Using the algorithm, we compute more than one hundredsixty thousand
  consecutive eigenvalues of the Laplacian on the modular surface,
  and investigate the asymptotic and statistic properties of the
  fluctuations in the Weyl remainder.
  We summarize the findings in two conjectures.
  One is on the maximum size of the Weyl remainder, and the other is on the
  distribution of a suitably scaled version of the Weyl remainder.
\end{abstract}

\thanks{The author thanks Andrew Booker, Sally Koutsoliotas,
  Stefan Lemurell, and Andreas Str\"ombergsson for organizing
  the AIM workshop {\em Computing Arithmetic Spectra}, March 2008.
  This event raised essential ideas for my current research.
  Further thanks are to Andrew Booker and Andreas Str\"ombergsson for
  useful discussions and for sharing their work in preparation.
  The author is supported by EPSRC Fellowship EP/H005188/1.}

\maketitle

\section{Introduction}

The real-analytic eigenvalue problem of the Laplacian on
finite, non-compact hyperbolic surfaces is an important one.
The solutions are automorphic forms and play a central role in
analytic number theory.
They can be used to extend the classical theory of Dirichlet series
with Euler products \cite{Maass1949}
and are closely related to the Millennium Problem of the Riemann
Hypothesis \cite{Sarnak2004}.
Moreover, they serve to find class numbers of real quadratic number fields
and hence solve questions which already inspired Gauss \cite{Booker+}.
Non-holomorphic automorphic forms, the so called Maass forms, are
intensively studied in spectral theory.
The spectral decomposition into Maass forms let to the discovery
of Selberg's trace formula \cite{Selberg1956,Hejhal1983}
which connects the spectrum to the geometric properties of the
underlying space.
Automorphic forms are the prime example of quantum unique ergodicity
\cite{Lindenstrauss2006}.
In addition, Maass forms span a Hilbert basis in quantum mechanics on
hyperbolic surfaces and serve as important examples in quantum chaos.
For instance, they play a distinguished role in eigenvalue statistics
\cite{BogomolnyGeorgeotGiannoniSchmit1992,BolteSteilSteiner1992}.
As a complete set of eigenfunctions of the Laplacian on hyperbolic
surfaces they have also found applications in general relativity and
cosmology \cite{AurichLustigSteinerThen2004}.

By approximating them, numerics can bring Maass forms closer to us
\cite{HejhalRackner1992}.
It is clear how to compute Maass forms \cite{Hejhal1999}.
However, it was difficult to find {\em consecutive} sets of solutions.
We present an algorithm which allows us to find large consecutive
sets of Maass forms efficiently.
The algorithm is based on three ingredients:
Hejhal's identity to compute eigenfunctions corresponding to given
eigenvalues;
linearisation which converts the analytic
eigenvalue problem locally to a matrix eigenvalue problem;
and explicit Turing bounds which serve to check and complete the results.

As they lie on our route, we apply results of ergodic theory, namely
the equidistribution of long closed horocycles
\cite{Zagier1981,Sarnak1981,Hejhal1996,Hejhal2000,FlaminioForni2003,
  Strombergsson2004}.
This allows us to unreveil an elegant view on the derivation of Hejhal's
identity in section \ref{Hejhal}, where we present the computation of
Maass forms under the preliminary assumption that the discrete eigenvalues
would be known.
The eigenvalue problem is linearised in section \ref{Linear},
thereby almost all of the eigenvalues are found.
Section \ref{Turing} presents Turing bounds.
These bounds drive a control algorithm which systematically checks and
completes the list of Maass forms until a large set of
{\em consecutive} Maass forms is found.
We have computed more than $160\,000$ consecutive Maass forms.
This exceeds any previous numerical solution of any non-integrable
system by far.
Results are listed in section \ref{Results}, where we investigate the
asymptotic and statistic properties of the fluctuations in the Weyl
remainder.

\section{Fundamental domains, equidistribution of long closed horocycles,
  and congruent points}\label{Then}

In this section, we will consider fundamental domains,
present a pullback algorithm, and use ergodic properties to
show that a discrete subgroup of the isometries can
be completely specified by a set of congruent pairs of points.

Let $X=\Gamma\backslash\h$ be a surface, where $\Gamma$ is a
cofinite, non-cocompact subgroup of $\SL(2,\R)$ which acts properly
discontinuous on the Poincar\'e upper half-plane $\h$.
The action is given by linear fractional transformations.
On $X$ we have the invariant metric $ds=|dz|/y$.
We use $\ell(\cdot)$ to denote length, and
$d(\cdot,\cdot)$ to denote hyperbolic distance.

Because $\Gamma$ is non-cocompact, $X$ has at least one cusp.
After a suitable coordinate transformation we may assume that
one of the cusps lies at $\infty$, and that the corresponding
isotropy subgroup $\Gamma_\infty\subset\Gamma$ is generated by
$z\mapsto z+1$.

Take some point $p$ in $\h$, not an elliptic fixed point.
Let $\F$ be the corresponding Dirichlet fundamental domain:
\begin{align*}
  \F=\{z\in\h\ |\ d(p,z)\le d(p,\gamma z)\ \forall\gamma\in\Gamma\}.
\end{align*}
We know from Siegel's theorem that $\F$ has a finite number of sides.
The sides of $\F$ fall into congruent pairs, let us
denote them $s_1,s_{n+1};s_2,s_{n+2};\ldots;s_n,s_{2n}$, and let
$g_1,g_2,\ldots,g_n\in\Gamma$ be the identification maps,
$g_ks_k=s_{n+k}$.
(Recall the convention that an elliptic fixed point of order two is
considered as a vertex separating {\em two distinct sides} of $\F$.)
We know that $g_1,g_2,\ldots,g_n$ generate $\Gamma$
(see \cite[pp.~73--74]{Katok1992}).

The following is an algorithm for computing the pullback of any given
point in the hyperbolic plane into a Dirichlet fundamental domain.
\begin{alg}[Pullback algorithm \cite{Strombergsson2000}]\label{alg:pullback}
  \begin{enumerate}
  \item Start with $z\in\h$.
  \item\label{alg:pullback.2} Compute the $2n$ points
    $g_1z,g_1^{-1}z,g_2z,g_2^{-1}z,\ldots,g_n^{-1}z$.
    Let $z'$ be the one of these which has {\em smallest} hyperbolic
    distance to $p$.
  \item If $d(p,z')<d(p,z)$, then {\em replace $z$ by $z'$}, and
    repeat from \ref{alg:pullback.2}.
  \item In the other case, $d(p,z')\ge d(p,z)$, we {\em know that $z$
    lies in $\F$}, hence $z$ is the searched-for pullback.
  \end{enumerate}
\end{alg}
Str\"ombergsson proved that his pullback algorithm always finds the pullback
within a finite number of operations \cite{Strombergsson2000}.

We use $z^*=x^*+iy^*$ to denote the pullback of $z=x+iy$,
and $\A^*=\{z^*\ |\ z\in \A\}$ to denote the pullback of $\A$,
for any $\A\subseteq\h$.

For any $y>0$, the curve $L_y=\{x+iy\ |\ x\in[0,1]\}$ is a closed
horocycle of length $\ell=1/y$ on $X$.
When $\ell\to\infty$, this curve is known to become uniformly
equidistributed on $X$ with respect to the Poincar\'e area
$d\mu=dxdy/y^2$ \cite{Strombergsson2004}.
Let us consider $Q$ equidistant points $\{z_j\}_{1\le j\le Q}$
on the closed horocyle $L_y$.
Using Algorithm \ref{alg:pullback}, we can compute
$\{z_j^*\}_{1\le j\le Q}$, and construct the
congruent pairs of points $\{(z_j,z_j^*)\}_{1\le j\le Q}$.

Of importance is:
\begin{lem}\label{lem:z,z}
  Fix $Q\in\N$ and $y>0$, and consider $Q$ equidistant points
  $\{z_j\}_{1\le j\le Q}$ on the closed horocycle $L_y$.
  For $Q$ sufficiently large and $y$ sufficiently small,
  the congruent pairs of points $\{(z_j,z_j^*)\}_{1\le j\le Q}$
  contain all information of the group $\Gamma$.
\end{lem}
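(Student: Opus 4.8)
The plan is to read ``contains all information of $\Gamma$'' as the statement that from the finite list of congruent pairs one can reconstruct a generating set, and hence $\Gamma$ itself. Since $\Gamma=\langle g_1,\dots,g_n\rangle$ and the $g_k$ are the side-pairings of $\F$, it suffices to show that for $y$ sufficiently small and $Q$ sufficiently large the pairs $\{(z_j,z_j^*)\}$ determine the $g_k$, equivalently, that they determine enough elements of $\Gamma$ to generate it.

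First I would pass from pairs to group elements. Each sample lies in a unique tile, $z_j\in\gamma_j\F$, so that $z_j^*=\gamma_j^{-1}z_j$ with $\gamma_j\in\Gamma$. A single pair fixes the transformation sending $z_j$ to $z_j^*$ only up to the one-parameter stabiliser of $z_j$ in $\mathrm{PSL}(2,\R)$, so one pair is not enough. However, a non-identity isometry of $\h$ fixes at most one interior point, so any M\"obius map is determined by its action on two distinct points. Hence if two distinct samples $z_i,z_j$ fall in the \emph{same} tile, the unique M\"obius map with $z_i\mapsto z_i^*$ and $z_j\mapsto z_j^*$ is exactly $\gamma_i^{-1}=\gamma_j^{-1}\in\Gamma$, and this element is recovered exactly. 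Two co-tiled pairs can moreover be recognised as such (a common map exists), so the extraction is intrinsic to the list and uses no prior knowledge of $\Gamma$.

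The heart of the argument is the equidistribution input. As $y\to0$ the closed horocycle $L_y$ becomes equidistributed on $X$ with respect to $d\mu$; equivalently its pullback trace fills $\F$ and must cross every side of $\F$. Crossing the side $s_k$ is precisely the event that invokes the generator $g_k$, and two tiles visited consecutively by $L_y$ differ by a single generator. Therefore the elements $\gamma_j$ attached to the tiles traversed by $L_y$ generate all of $\Gamma$ once $y$ is small enough that every side is crossed. Fixing such a $y$ and letting $Q$ grow, the $Q$ equidistant samples (spacing $1/(Qy)$ along $L_y$) eventually place at least two points on every sub-arc of positive length, so every traversed tile, apart from measure-zero tangencies, is sampled twice and its element is recovered by the two-point construction above. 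Consequently the recovered elements generate $\Gamma$, which is the assertion.

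I expect the main obstacle to be the passage from ``visited tiles'' to \emph{all} of $\Gamma$: a merely long horocycle could a priori avoid a region of $X$ and so miss a generator, and it is exactly equidistribution that forbids this, forcing a crossing of every side identification. A related subtlety is the parabolic generator $z\mapsto z+1$: the horocycle at small height does not enter the cusp tile in $\h$, but what matters is that the pullback trace crosses the vertical sides of $\F$, which equidistribution still guarantees. The remaining work, namely making ``$Q$ sufficiently large'' and ``$y$ sufficiently small'' quantitative by comparing the rate of equidistribution with the spacing $1/(Qy)$ and the arc lengths inside each tile, is routine bookkeeping rather than a genuine difficulty.
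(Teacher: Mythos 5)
Your overall strategy coincides with the paper's: use equidistribution of $L_y^*$ to force the horocycle to cross the orbit of every side of $\F$, sample densely enough that each traversed tile receives several sample points, recover group elements by M\"obius rigidity, and obtain the side pairings (hence generators of $\Gamma$) from products attached to consecutive tiles. The completeness half of your argument matches the paper, which isolates the crossing statement as Lemma \ref{lem:l&s} (for each $k$ there is a segment $l_k\subset L_y$ meeting $\Gamma\partial\F$ exactly once, not at a vertex, with positive length in both adjacent tiles). Be aware, though, that this is not quite ``routine bookkeeping'': mere closeness of $L_y^*$ to a point of $s_k$ does not by itself produce a transversal crossing, and the paper has to pick a non-vertex point $\omega_k\in s_k$, compare $\Im(\gamma_k^{-1}\omega_k)$ with the infimum and supremum of $\Im(\gamma_k^{-1}\omega)$ over the side, and re-choose $\omega_k$ in a degenerate case.

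The genuine gap is in your soundness (recognition) step. You claim that two co-tiled pairs ``can be recognised as such (a common map exists)''. That implication runs the wrong way: a common M\"obius map taking $z_i\mapsto z_i^*$ and $z_j\mapsto z_j^*$ exists whenever $d(z_i,z_j)=d(z_i^*,z_j^*)$, and this equality can hold accidentally for two samples lying in \emph{different} tiles; the resulting element of $\SL(2,\R)$ is then in general not in $\Gamma$. Since your final step is ``the recovered elements generate $\Gamma$'', a single spurious element destroys the conclusion: the group generated could be strictly larger than $\Gamma$, so $\Gamma$ is not reconstructed from the list. This is precisely what the paper's six-point configuration is designed to guard against: it searches only for six \emph{successive} pairs of which three share one map $\gamma_h$ and the other three share a map $\gamma_{h'}\neq\pm\gamma_h$ --- a much more rigid pattern, whose existence for every side pair is guaranteed by Lemma \ref{lem:l&s}, and on the basis of which the paper asserts that all recovered maps lie in $\Gamma$ and reads off $g_k=\gamma_h\gamma_{h'}^{-1}$ directly. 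To repair your proof you would need either to strengthen the two-point test to a pattern rigid enough to exclude accidental congruences, or to prove that such accidental matches cannot occur among the particular samples used; neither is addressed in your proposal.
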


For $\A\subseteq\h$, we use $\partial\A$ to denote its boundary, and
$\Gamma\A=\{\gamma z\ |\ z\in\A,\gamma\in\Gamma\}$ to denote its orbit.

For the proof of Lemma \ref{lem:z,z}, we need the following:
\begin{lem}\label{lem:l&s}
  If the horocycle $L_y^*$ comes sufficiently close to each point
  in $\F$, then we have:
  For each $1\le k\le n$ there exists a line segment $l_k\subset L_y$
  such that $l_k$ meets $\Gamma\partial\F$ exactly once,
  $l_k^*$ does not meet $\partial\F$ in a vertex, and
  $\ell(l_k)>\ell(l_k\cap\gamma_k^{-1}\F)>0$ for some $\gamma_k\in\Gamma$.
\end{lem}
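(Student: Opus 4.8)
\emph{Proof proposal.} The plan is to produce each $l_k$ as a short sub-arc of $L_y$ straddling a single $\Gamma$-translate of the side $s_k$, and then to read the three required properties off the local geometry near an interior point of that side. First I would fix $k$ and choose an interior point $w_k$ of $s_k$ that is not a vertex; since the sides are geodesic arcs and there are only finitely many vertices, such points form a nonempty open subarc of $s_k$. I would then pick a hyperbolic ball $B=B(w_k,\rho)$ so small that its closure contains no vertex and meets $\partial\F$ only along the interior of $s_k$. In such a ball the tiling boundary $\Gamma\partial\F$ reduces to the single geodesic $s_k$, and $B$ is cut by $s_k$ into the two halves $B\cap\F$ and $B\cap g_k^{-1}\F$; here I use the side-pairing relation $g_ks_k=s_{n+k}$, which gives $g_k^{-1}s_{n+k}=s_k$ and hence identifies $g_k^{-1}\F$ as the tile adjacent to $\F$ across $s_k$.

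Next I would invoke the hypothesis. Because $L_y^*$ comes sufficiently close to each point of $\F$, it comes within $\rho/2$ of $w_k$, so I may pick $z\in L_y$ whose pullback $z^*\in B\cap\F$ realizes this. Writing $z^*=\gamma z$ with $\gamma\in\Gamma$, the point $z=\gamma^{-1}z^*$ sits in the tile $\gamma^{-1}\F$, inside the ball $\gamma^{-1}B$, hugging the edge $\gamma^{-1}s_k$. I would take $l_k$ to be the connected component of $L_y\cap\gamma^{-1}B$ through $z$, shrinking $\rho$ if necessary. Since $\gamma^{-1}B$ meets $\Gamma\partial\F$ only in $\gamma^{-1}s_k$ and contains no vertex, such an $l_k$ meets $\Gamma\partial\F$ at most once and its pullback $l_k^*$ cannot meet $\partial\F$ in a vertex; this secures the first and second clauses. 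Setting $\gamma_k=\gamma$, the two tiles met by $l_k$ are $\gamma^{-1}\F=\gamma_k^{-1}\F$ and $\gamma^{-1}g_k^{-1}\F$, so once I know that $l_k$ genuinely crosses the edge with positive length on each side I obtain $\ell(l_k)>\ell(l_k\cap\gamma_k^{-1}\F)>0$ at once.

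The hard part will be upgrading mere proximity to an honest transversal crossing: the density hypothesis only places $z$ near the edge $\gamma^{-1}s_k$, whereas I need $l_k$ to pass through $\gamma^{-1}s_k$ with positive length on both sides. Transversality can fail only where the horizontal horocycle is tangent to the geodesic $\gamma^{-1}s_k$, that is, at the apex of that arc, which is a single point on each translate. I would remove this degeneracy by exploiting the freedom in $w_k$: as $w_k$ ranges over the open subarc of interior points of $s_k$, the associated edges $\gamma^{-1}s_k$ and crossing points vary, while the tangential positions are confined to a nowhere-dense set of apex configurations. Hence for a suitable choice of $w_k$ the approach supplied by the density hypothesis is transversal, and a short $l_k$ centred at the crossing lies partly in $\gamma_k^{-1}\F$ and partly in $\gamma^{-1}g_k^{-1}\F$, each piece of positive length.

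The remaining bookkeeping---that ``sufficiently close'' can be quantified in terms of $\rho$ and the minimal distance from $w_k$ to the vertices, uniformly over the finitely many indices $k$---is routine, and I would collect it at the end so that a single smallness condition on $y$ serves all $k$ simultaneously.
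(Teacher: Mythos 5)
Your proposal is correct and follows essentially the same route as the paper: fix a non-vertex interior point of $s_k$, use the closeness of $L_y^*$ to obtain a $\gamma_k\in\Gamma$ whose translate $\gamma_k^{-1}s_k$ is crossed transversally by $L_y$, dispose of the degenerate apex/tangency case by re-choosing the base point, and take a short open segment around the crossing so that it meets $\Gamma\partial\F$ exactly once with positive length in two adjacent tiles. The only difference is presentational---you argue via small hyperbolic balls and the local tiling structure, while the paper argues directly with imaginary parts (an intermediate-value argument along $\gamma_k^{-1}s_k$)---and both treatments leave the degenerate-case re-choice at the same informal level, so no gap beyond the paper's own.
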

\begin{proof}
Fix $1\le k\le n$.
Fix some point $\omega_k\in s_k$, not a vertex of $\F$.
By assumption $L_y^*$ comes sufficiently close to any point in $\F$.
Thus, for some $\eps>0$ there exists $\gamma_k\in\Gamma$ such that
$|y-\Im(\gamma_k^{-1}\omega_k)|<\eps$.
If unexpectedly
$\Im(\gamma_k^{-1}\omega_k)=\sup_{\omega\in s_k}\Im(\gamma_k^{-1}\omega)$
should hold, we repeat the proof with another $\omega_k\in s_k$.

We have
$\inf_{\omega\in s_k}\Im(\gamma_k^{-1}\omega)<\Im(\gamma_k^{-1}\omega_k)<y+\eps$
and
$y-\eps<\Im(\gamma_k^{-1}\omega_k)<\sup_{\omega\in s_k}\Im(\gamma_k^{-1}\omega)$.
For $\eps$ sufficiently small, $L_y$ intersects $\gamma_k^{-1}s_k$
at least once (and at most twice) not at a vertex.

Take an open line segment $l_k\subset L_y$ such that
$l_k$ intersects $\gamma_k^{-1}s_k$,
and such that it meets $\Gamma\partial\F$ exactly once,
but not in a vertex of $\gamma_k^{-1}\F$.
Since $l_k$ is an open line segment, it has positive
length on both sides of the intersection with $\gamma_k^{-1}s_k$.
\end{proof}
\begin{proof}[Proof of Lemma \ref{lem:z,z}]
Our task is to extract the side identification maps
$g_1,\ldots,g_n$ from the pairs of points
$\{(z_j,z_j^*)\}_{1\le j\le Q}$.

The points $z_1,\ldots,z_Q$ are on the curve $L_y$.
In the limit $y\to0$, the closed horocycle $L_y$ equidistributes on
$\Gamma\backslash\h$.
If $y>0$ is finite, the curve $L_y^*$ is no longer dense in $\F$.
Still, for $y$ {\em sufficiently small}, $L_y^*$ comes sufficiently close
to each point in $\F$.

Fix some $1\le k\le n$.
Let $l_k$ be the line segment given by Lemma \ref{lem:l&s}.
There are two distinct elements $\gamma_k$,$\gamma_{n+k}\in\Gamma$,
$\gamma_k\not=\pm\gamma_{n+k}$, such that $\ell(l_k\cap\gamma_k^{-1}\F)>0$
and $\ell(l_k\cap\gamma_{n+k}^{-1}\F)>0$.
For $Q$ sufficiently large, at least three succesive points of
$\{z_j\}_{1\le j\le Q}$ are on $l_k\cap\gamma_k^{-1}\F$ and another three
are on $l_k\cap\gamma_{n+k}^{-1}\F$.
This is true for any $1\le k\le n$.
Hence, for each $1\le k\le n$ there are six succesive pairs of points
in $\{(z_j,z_j^*)\}_{1\le j\le Q}$ such that three of them satisfy
$z_j^*=\gamma_kz_j$, while the other three satisfy $z_j^*=\gamma_{n+k}z_j$.

{\em Now, let only the congruent pairs of points
  $\{(z_j,z_j^*)\}_{1\le j\le Q}$ be given.}
We search for all sequences of six succesive pairs of points in
$\{(z_j,z_j^*)\}_{1\le j\le Q}$ subject to the condition that three of them
satisfy $z_j^*=\gamma_hz_j$, while the other three of them satisfy
$z_j^*=\gamma_{h'}z_j$, where $\gamma_h,\gamma_{h'}\in\SL(2,R)$,
$\gamma_h\not=\pm\gamma_{h'}$.
By construction all $\gamma_h$ and $\gamma_{h'}$ are in $\Gamma$.
Moreover, amongst all these sequences of six succesive pairs of points,
for each $1\le k\le n$ there is at least one sequence which
is associated to the congruent pair of sides $s_k,s_{n+k}$.
The corresponding side identification map (or its inverse) follows from
$g_k=\gamma_h\gamma_{h'}^{-1}$.
\end{proof}

\section{Maass forms}\label{Maass}

Let $X=\Gamma\backslash\h$ be a finite, non-compact surface
with invariant metric $ds=|dz|/y$.
According to the metric, the Laplacian reads
$\Delta=-y^2(\partial^2/\partial x^2+\partial^2/\partial y^2)$.
\begin{Def}[\cite{Maass1949}]
  A Maass form is a
  \begin{enumerate}
    \item real analytic, $f\in C^\infty(\h)$,
    \item square integrable, $f\in L^2(X)$,
    \item automorphic, $f(\gamma z)=f(z)\ \forall\gamma\in\Gamma$,
    \item eigenfunction of the Laplacian, $\Delta f(z)=\lambda f(z)$.
  \end{enumerate}
  If a non-constant Maass form vanishes in all cusps of $X$,
  it is called a Maass cusp form.
\end{Def}

According to the Roelcke-Selberg spectral resolution of the Laplacian
\cite{Selberg1956,Roelcke1966}, its spectrum contains both,
a discrete and a continuous part.
The discrete part of the spectrum is spanned by the constant
eigenfunction $f_0$ and a countable number of Maass cusp forms
$f_1,f_2,f_3,\ldots$ which we take to be ordered with increasing
eigenvalues $0=\lambda_0<\lambda_1\le\lambda_2\le\lambda_3\le\ldots$.
The continuous part of the spectrum $\lambda\ge1/4$,
is spanned by Eisenstein series.

In order to keep notation simple, we focus on Maass cusp forms
on finite surfaces with exactly one cusp.
We assume that the cusps lies at $\infty$, and that the corresponding
isotropy subgroup $\Gamma_\infty\subset\Gamma$ is generated by
$z\mapsto z+1$.

In this setting, Maass cusp forms associated to the eigenvalue
$\lambda=r^2+1/4$ have the Fourier expansion
\begin{align*}
  f(z)=\sum_{n\in\Z-\{0\}}a_ny^{1/2}K_{ir}(2\pi|n|y)e^{2\pi inx},
\end{align*}
where $K$ stands for the $K$-Bessel function which decays exponentially
for large arguments \cite[Eq.~(14)]{BookerStrombergssonThen}.
The expansion coefficients $a_n$ grow at most polynomially in $n$.
If we bound $y$ from below, $y\ge Y$, and allow for a small numerical
error of size $\eps>0$, there is an $M(Y)=const/Y$ such that
\begin{align}\label{eq:expansion}
  f(z)=\sum_{0\not=|n|\le M(Y)}a_ny^{1/2}K_{ir}(2\pi|n|y)e^{2\pi inx}
  +[|error|<\eps].
\end{align}
The constant in $M(Y)=const/Y$, which depends on $\eps>0$, $r$, and
the group $\Gamma$, can be worked out explicitly in each case.
Of importance is:
\begin{lem}
  Let the fundamental domain $\F$ be bounded from below,
  $Y_0:=\inf_{z\in\F}\Im(z)>0$.
  Apart from a numerical error of at most $\eps>0$,
  a Maass cusp form is completely specified by its
  eigenvalue and a finite number of expansion
  coefficients, $\{\lambda,\{a_n\}\}_{0\not=|n|\le M(Y_0)}$.
\end{lem}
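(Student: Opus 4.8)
The plan is to reconstruct the value of $f$ at an arbitrary point of $X$ from the finite data $\{\lambda,\{a_n\}\}_{0\not=|n|\le M(Y_0)}$, up to an error of size $\eps$. The key idea is to combine automorphy with the pullback, so that evaluating $f$ anywhere is reduced to evaluating it at a point of $\F$, where the imaginary part is bounded below and the Fourier series can be truncated uniformly.

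First I would fix an arbitrary $z\in\h$ and run Algorithm \ref{alg:pullback} to obtain its pullback $z^*=x^*+iy^*\in\F$; by Str\"ombergsson's result this terminates in finitely many steps and uses only the group $\Gamma$. Since the pullback satisfies $z^*=\gamma z$ for some $\gamma\in\Gamma$ and $f$ is automorphic, we have $f(z)=f(z^*)$. The crucial step is then the uniform lower bound: because $z^*\in\F$ and $Y_0=\inf_{z\in\F}\Im(z)>0$ by hypothesis, we get $y^*\ge Y_0$. Hence the single truncation level $M(Y_0)=const/Y_0$ is finite, and the truncated expansion \eqref{eq:expansion} applies at $z^*$ with this one value of $M$, uniformly over all of $\F$:
\begin{align*}
  f(z)=f(z^*)=\sum_{0\not=|n|\le M(Y_0)}a_n(y^*)^{1/2}K_{ir}(2\pi|n|y^*)e^{2\pi inx^*}+[|error|<\eps].
\end{align*}

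It then remains to observe that the right-hand side depends only on the prescribed data: the eigenvalue enters solely through $r$ (via $\lambda=r^2+1/4$) in the order of the Bessel function $K_{ir}$, the coefficients appearing are exactly the given finite list $\{a_n\}_{0\not=|n|\le M(Y_0)}$, and the pullback $z^*$ is a function of $z$ and $\Gamma$ alone. Thus $f$ is determined at every point, up to the error $\eps$, by $\{\lambda,\{a_n\}\}_{0\not=|n|\le M(Y_0)}$, which is the assertion.

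I do not expect a genuine obstacle here; the one delicate point is that the truncation level $M$ must be uniform over the whole of $\F$, and this is precisely what the hypothesis $Y_0>0$ supplies. Were $\F$ to descend to the real axis ($Y_0=0$), then $M(Y_0)=const/Y_0$ would be infinite and no finite list of coefficients could suffice; the assumption of a single cusp at $\infty$ together with $Y_0>0$ is exactly what rules this out.
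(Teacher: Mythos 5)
Your proof is correct and follows exactly the paper's own argument: reduce to the fundamental domain via automorphy (the pullback), then use $y^*\ge Y_0$ to apply the truncated expansion \eqref{eq:expansion} with the single level $M(Y_0)$. The paper states this in three short sentences; you have merely made the pullback step and the uniformity of the truncation explicit.
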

\begin{proof}
  Let $\{\lambda,\{a_n\}\}_{0\not=|n|\le M(Y_0)}$ be given.
  By automorphy it is enough to know $f(z)$ inside the fundamental domain.
  There, we have $y\ge Y_0$, and the value of $f(z)$ follows from
  \eqref{eq:expansion}.
\end{proof}
\begin{rem}
  For any $\{\lambda,\{a_n\}\}_{0\not=|n|\le M(Y_0)}$, the Fourier
  expansion \eqref{eq:expansion} is real analytic, square integrable on $X$,
  vanishes in the cusp at $\infty$, and satisfies the eigenvalue equation
  of the Laplacian.
  However, {\em only for very specific choices of
    $\{\lambda,\{a_n\}\}_{0\not=|n|\le M(Y_0)}$},
  the corresponding function is automorphic, and hence a Maass cusp form.
\end{rem}

\section{Computing Maass forms}\label{Hejhal}

For the moment, we assume that the eigenvalue would be given.
Our task is to find the coefficients $\{a_n\}$ such that automorphy holds,
$f(\gamma z)=f(z)\ \forall\gamma\in\Gamma$.
We use the congruent pairs of points of Lemma \ref{lem:z,z} with
$0<y<Y_0$  and $Q\ge2M(y)$ to test automorphy, $f(z_j^*)=f(z_j)$ for all
$1\le j\le Q$.
This results in $Q$ linear equations in $2M$ unknowns
$\{a_n\}_{0\not=|n|\le M}$.

Note, however, that $f(z_j)=f(z_j^*)$ is ill-conditioned.
If one tries to solve for $a_M$, one gets
$a_{M}y^{1/2}K_{ir}(2\pi|M|y)e^{2\pi iMx_j}
=f(z_j^*)-[f(z_j)_{\text{less the $M$-th summand}}]$.
The absolute value of the l.h.s\ is bounded by $\eps$
which is smaller than the numerical error of the r.h.s..

We use exponentially weighted superpositions to convert the
ill-conditioned linear system of equations into a
well-conditioned linear system of equations \cite{Hejhal1999},
\begin{align*}
  \frac1Q\sum_{j=1}^Qf(z_j)e^{-2\pi imx_j}
  =\frac1Q\sum_{j=1}^Qf(z_j^*)e^{-2\pi imx_j}.
\end{align*}
Using geometric series on the l.h.s.\ and Fourier expanding the
r.h.s.\ results in Hejhal's identity:
\begin{multline}\label{eq:Hejhal}
  a_my^{1/2}K_{ir}(2\pi|m|y)+[|error|<\eps]
  =\frac1Q\sum_{j=1}^Qf(z_j)e^{-2\pi imx_j}\\
  =\frac1Q\sum_{j=1}^Qf(z_j^*)e^{-2\pi imx_j}
  =\sum_{0\not=|n|\le M(Y_0)}a_nV_{mn}+[|error|<\eps]
  \qquad\forall0\not=|m|\le M(y)
\end{multline}
where $V_{mn}
=\frac1Q\sum_{j=1}^Q{y_j^*}^{1/2}K_{ir}(2\pi|n|y_j^*)e^{2\pi i(nx_j^*-mx_j)}$.
\begin{alg}[Phase 2 \cite{Hejhal1999}]\label{alg:phase2}
  If $\{\lambda,\{a_n\}\}_{0\not=|n|\le M(Y_0)}$ are given, then any
  further coefficients follow {\em directly} from
  \begin{align*}
    a_m\simeq\frac1{y^{1/2}K_{ir}(2\pi|m|y)}\sum_{0\not=|n|\le M(Y_0)}V_{mn}a_n
    \qquad\forall|m|>M(Y_0)
  \end{align*}
  as $y\to0$.
  The numerical error is bounded by
  \begin{align*}
    |error\text{ of }a_m|<\frac{2\eps}{|y^{1/2}K_{ir}(2\pi|m|y)|}.
  \end{align*}
\end{alg}
We use $C(\lambda)$ to denote the matrix
$C_{mn}(\lambda)=\delta_{mn}-\frac{V_{mn}}{y^{1/2}K_{ir}(2\pi|m|y)}$,
$0\not=|m|\le M(Y_0)$, $0\not=|n|\le M(Y_0)$, and $\vec a$ to denote
the vector
$\vec a=(a_{-M(Y_0)},a_{-M(Y_0)+1},\ldots,a_{-1},a_1,a_2,\ldots,a_{M(Y_0)})$.
\begin{alg}[Phase 1 \cite{Hejhal1999}]\label{alg:phase1}
  If {\em only} $\lambda$ is given, then the coefficients
  $\{a_n\}_{0\not=|n|\le M(Y_0)}$ follow from {\em solving} the linear
  system of equations
  \begin{align*}
    C(\lambda)\vec a\simeq0, \qquad\vec a\not=0.
  \end{align*}
  The numerical error is bounded by
  \begin{align*}
    \Big|error\text{ of }\sum_{0\not=|n|\le M(Y_0)}C_{mn}a_n\Big|
    <\frac{2\eps}{|y^{1/2}K_{ir}(2\pi|m|y)|}
    \qquad\forall0\not=|m|\le M(Y_0).
  \end{align*}
\end{alg}
\begin{rem}\label{rem:y1}
  If we make a good choice for the height $y$ of the horocycle
  and take some $M_0\ge M(Y_0)$ such that $|y^{1/2}K_{ir}(2\pi|m|y)|\gg1$
  for all $0\not=|m|\le M_0$, the linear system of equations
  \eqref{eq:Hejhal} is well-conditioned for $0\not=|m|\le M_0$.
\end{rem}

\section{Finding eigenvalues}\label{Linear}

It remains to find the eigenvalues $\lambda$ for which the linear
system $C(\lambda)\vec a\simeq0$ has non-trivial solutions
$\vec a\not=0$, and for which the non-trivial solutions are independent of
the height $y$ of the horocycle.

To this end, we discretise the $\lambda$-axis into trial values
$0<\tilde\lambda_1<\tilde\lambda_2<\tilde\lambda_3<\ldots$
which lie sufficiently close together, and explore the neighbourhood
of each trial value for non-trivial solutions of
$C(\tilde\lambda+h)\vec a\simeq0$.
For the latter, we linearise with respect to the eigenvalue,
cf.\ \cite{Ruhe1973},
\begin{align*}
  C(\tilde\lambda+h)
  =C(\tilde\lambda)+hC'(\tilde\lambda)+\frac{h^2}2R(\tilde\lambda,h),
\end{align*}
where $R(\tilde\lambda,h)$ is a matrix whose norm is bounded by
\begin{align*}
  ||R(\tilde\lambda,h)||\le\sup_{|\xi|\le|h|}||C''(\tilde\lambda+h)||.
\end{align*}

Let $\tilde\lambda$ be fixed, $\eta$ be a small complex parameter, and let
$(h_\eta,\vec\alpha_\eta)_{|\tilde\lambda}$ be the non-trivial solutions of
\begin{align}\label{eq:perturbed}
  {C'}^{-1}(\tilde\lambda)\big(C(\tilde\lambda)
  +\frac{\eta^2}2R(\tilde\lambda,\eta)\big)\vec\alpha_\eta
  =-h_\eta\vec\alpha_\eta.
\end{align}
For $\eta=0$, we know
$(h,\vec\alpha)_{|\tilde\lambda}:=(h_0,\vec\alpha_0)_{|\tilde\lambda}$
numerically.
If $C'(\tilde\lambda)$ is regular, there are $2M(Y_0)$ matrix eigenvalues
of ${C'}^{-1}(\tilde\lambda)C(\tilde\lambda)$, and there are $2M(Y_0)$
generalized eigenvectors.

For $\eta\not=0$, we use perturbation theory to find
$(h_\eta,\vec\alpha_\eta)_{|\tilde\lambda}$.
\begin{conj}\label{conj:h}
  Under suitable conditions, in particular $|\eta|$ sufficiently small, we have
  \begin{align*}
    h_\eta=h+O(\eta^2) \quad \text{and} \quad
    \vec\alpha_\eta=\vec\alpha+O(\eta).
  \end{align*}
\end{conj}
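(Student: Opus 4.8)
The plan is to read \eqref{eq:perturbed} as an ordinary matrix eigenvalue problem in which $-h_\eta$ is the eigenvalue and $\vec\alpha_\eta$ the eigenvector, and then to track the eigenpair that reduces to $(h,\vec\alpha)$ at $\eta=0$ by perturbation theory. Write
$$M(\eta):={C'}^{-1}(\tilde\lambda)\Big(C(\tilde\lambda)+\tfrac{\eta^2}2R(\tilde\lambda,\eta)\Big)={C'}^{-1}(\tilde\lambda)C(\tilde\lambda)+\tfrac{\eta^2}2{C'}^{-1}(\tilde\lambda)R(\tilde\lambda,\eta),$$
so that \eqref{eq:perturbed} becomes $M(\eta)\vec\alpha_\eta=-h_\eta\vec\alpha_\eta$. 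At $\eta=0$ this is $M(0)\vec\alpha=-h\vec\alpha$, i.e.\ $-h$ is an eigenvalue of $M(0)={C'}^{-1}(\tilde\lambda)C(\tilde\lambda)$ with eigenvector $\vec\alpha$, exactly the datum known numerically. The whole statement is thus the assertion that this eigenpair varies smoothly as the perturbation $\tfrac{\eta^2}2{C'}^{-1}R$ is switched on; since eigenvectors are defined only up to scale, the claim $\vec\alpha_\eta=\vec\alpha+O(\eta)$ is understood relative to a fixed normalisation and phase.

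First I would show that $M(\eta)$ is a smooth (indeed, where $C$ is analytic in $\lambda$, analytic) matrix-valued function of $\eta$ near $\eta=0$, with $M(\eta)=M(0)+O(\eta^2)$. Regularity of $C'(\tilde\lambda)$ is assumed, so ${C'}^{-1}(\tilde\lambda)$ is a fixed bounded matrix. The $\eta$-dependence sits entirely in $R(\tilde\lambda,\eta)$, which by Taylor's theorem equals $2\int_0^1(1-t)\,C''(\tilde\lambda+t\eta)\,dt$ and hence tends to $C''(\tilde\lambda)$ as $\eta\to0$; in particular $||R(\tilde\lambda,\eta)||\le\sup_{|\xi|\le|\eta|}||C''(\tilde\lambda+\xi)||$ is bounded for $|\eta|$ small, so $||M(\eta)-M(0)||\le\tfrac{|\eta|^2}2\,||{C'}^{-1}(\tilde\lambda)||\,\sup_{|\xi|\le|\eta|}||C''(\tilde\lambda+\xi)||=O(\eta^2)$. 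This step needs the entries of $C$ to be twice differentiable in $\lambda$; since those entries are built from $K_{ir}(2\pi|n|y)$ with $r=\sqrt{\lambda-1/4}$, this reduces to the smooth dependence of the $K$-Bessel function on its order, which holds away from the branch point $\lambda=1/4$.

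With $M(\eta)=M(0)+\eta^2B+O(\eta^3)$, $B:=\tfrac12{C'}^{-1}(\tilde\lambda)C''(\tilde\lambda)$, the conclusion follows from standard analytic perturbation theory for a simple eigenvalue (Rellich/Kato): if $-h$ is a simple eigenvalue of $M(0)$, isolated from the rest of the spectrum by a gap $\delta>0$, then for $|\eta|$ small there is a unique eigenvalue $-h_\eta$ of $M(\eta)$ near $-h$ and a corresponding eigenvector $\vec\alpha_\eta$, both analytic in the perturbation. Because the perturbation enters at order $\eta^2$, the first-order Rayleigh--Schr\"odinger formulae give $h_\eta=h+O(\eta^2)$, and the resolvent representation of the spectral projector gives $\vec\alpha_\eta=\vec\alpha+O(\eta^2)$ as long as $\delta$ is bounded below. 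Both asserted rates, in particular the weaker $O(\eta)$ for $\vec\alpha_\eta$, then follow at once. These are precisely the ``suitable conditions'': $C'(\tilde\lambda)$ regular and the target eigenvalue $-h$ simple and gapped.

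The main obstacle is the uniformity implicit in the statement. The trial values $\tilde\lambda$ march along the whole $\lambda$-axis, and some will fall near a near-collision of two Maass eigenvalues, where two eigenvalues of $M(0)$ approach each other and the gap $\delta$ shrinks. The eigenvalue bound $h_\eta=h+O(\eta^2)$ is robust: a $2\times2$ model shows that in every regime of $\delta$ the cluster moves only at order $\eta^2$, whether it splits (degenerate case, $\delta\lesssim\eta^2$) or merely shifts (simple case). The eigenvector, however, degrades to $\vec\alpha_\eta=\vec\alpha+O(\eta^2/\delta)$, which becomes useless once $\delta\lesssim\eta^2$ and matches exactly the claimed $O(\eta)$ when $\delta\sim\eta$. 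This is why the author asks only for $O(\eta)$ for $\vec\alpha_\eta$ rather than the generic $O(\eta^2)$. Turning the heuristic into a theorem uniform over all trial values---controlling how small $\delta$ may be relative to $\eta$ along the spectrum, and ruling out exact degeneracies of $M(0)$---is the delicate point, and is the reason the statement is posed as a conjecture rather than proved outright.
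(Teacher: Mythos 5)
You should note at the outset that the paper contains \emph{no proof} of this statement: it is posed as Conjecture \ref{conj:h} and explicitly left open, the author remarking that ``it is not even evident that all solutions of \eqref{eq:perturbed} have a Taylor expansion in $\eta$ around $0$.'' So your proposal has to stand on its own, and it does not close the conjecture. The part you do correctly is the generic case: writing $M(\eta)={C'}^{-1}(\tilde\lambda)\big(C(\tilde\lambda)+\frac{\eta^2}2R(\tilde\lambda,\eta)\big)$, using the integral form of the Taylor remainder to get $\|M(\eta)-M(0)\|=O(\eta^2)$, and invoking Kato--Rellich perturbation theory for a \emph{simple} eigenvalue of $M(0)$ separated from the rest of the spectrum by a fixed gap $\delta$. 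Under those hypotheses both estimates (indeed even $\vec\alpha_\eta=\vec\alpha+O(\eta^2)$) follow, and this is a legitimate partial result, consistent with the conjecture's hedge ``under suitable conditions.''

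The genuine gap is your claim that the eigenvalue bound $h_\eta=h+O(\eta^2)$ is ``robust\dots in every regime of $\delta$,'' including the degenerate one. That claim is true for Hermitian (more generally normal) matrices, where Weyl's inequality makes eigenvalues Lipschitz in the perturbation; but the paper stresses that $C(\tilde\lambda)$ ``is not hermite by construction,'' so $M(0)={C'}^{-1}(\tilde\lambda)C(\tilde\lambda)$ is non-normal, and Remark \ref{rem:lambda} states that degeneracies with formal solutions ``happen frequently,'' so this case cannot be dismissed. For a defective eigenvalue your claim fails at the conjectured rate: if the relevant invariant subspace carries a Jordan block, the model problem is
\begin{align*}
  M(\eta)=\begin{pmatrix}-h&1\\ \eta^2&-h\end{pmatrix},
\end{align*}
whose eigenvalues are $-h\pm\eta$; an $O(\eta^2)$ perturbation moves the eigenvalue by $O(\eta)$, not $O(\eta^2)$, and the expansion is a Puiseux series in $\eta$, not a Taylor series --- precisely the obstruction the author points to. The same loss occurs for non-defective but nearly degenerate non-normal pairs, since the Bauer--Fike bound carries the eigenvector condition number $\kappa\sim1/\delta$, giving eigenvalue shifts of size $\eta^2/\delta$. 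So the regime that forces the statement to remain a conjecture --- non-normal, (near-)degenerate, uniformly over all trial values $\tilde\lambda$ --- is exactly where your argument breaks; and in that regime it is the eigenvalue estimate, the one you call robust, that degrades to $O(\eta)$, while the eigenvector estimate $\vec\alpha_\eta=\vec\alpha+O(\eta)$ is the one that can survive.
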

Apart from numerical confirmations, it would be nice to have a proof
of Conjecture \ref{conj:h}.
However, it is not even evident that all solutions of \eqref{eq:perturbed}
have a Taylor expansion in $\eta$ around $0$.

If Conjecture \ref{conj:h} is true, it implies the following:
Using $\eta=h_\eta$ in \eqref{eq:perturbed}, we get
$C(\tilde\lambda+\eta)\alpha_\eta=0$ with $\eta=h+O(\eta^2)$
and $\vec\alpha_\eta=\vec\alpha+O(\eta)$.
For $|\eta|$ sufficiently small this can be solved for $\eta$ which
gives $\eta=h+O(h^2)$.
This allows to find numerical approximations,
$\lambda=\tilde\lambda+h+O(h^2)$, for the eigenvalues of
$C(\lambda)\vec a\simeq0$ in some neighbourhood of the trial value
$\tilde\lambda$.
The approximations can be refined iteratively.
This establishes:
\begin{thm}
  If Conjecture \ref{conj:h} holds true, and if the trial values
  $0<\tilde\lambda_1<\tilde\lambda_2<\ldots<\tilde\lambda_{\text{max}}$
  lie sufficiently close together,
  the linearisation in the eigenvalue allows to find {\em all} eigenvalues
  of $C(\lambda)\vec a\simeq0$ in the interval
  $[0,\tilde\lambda_{\text{max}}]$.
\end{thm}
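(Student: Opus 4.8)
The plan is to prove completeness by establishing, at each trial value $\tilde\lambda$, a bijection between the small generalized eigenvalues $h$ of the pencil $\big(C(\tilde\lambda),C'(\tilde\lambda)\big)$ and the true eigenvalues of $C(\lambda)\vec a\simeq0$ in a neighbourhood of $\tilde\lambda$, and then to cover $[0,\tilde\lambda_{\text{max}}]$ by these neighbourhoods. First I would characterise the true eigenvalues locally. If $\lambda^\ast=\tilde\lambda+h^\ast$ is an eigenvalue, so that $C(\lambda^\ast)\vec a^\ast=0$ for some $\vec a^\ast\neq0$, then inserting the Taylor expansion $C(\tilde\lambda+h^\ast)=C(\tilde\lambda)+h^\ast C'(\tilde\lambda)+\tfrac{(h^\ast)^2}2R(\tilde\lambda,h^\ast)$ and multiplying by ${C'}^{-1}(\tilde\lambda)$ shows that $(h^\ast,\vec a^\ast)$ solves \eqref{eq:perturbed} with the self-consistency $\eta=h_\eta=h^\ast$. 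Conversely, every such self-consistent solution makes $C(\lambda^\ast)$ singular. Thus the true eigenvalues near $\tilde\lambda$ are exactly the fixed points $\eta=h_\eta$ of \eqref{eq:perturbed}.

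Next I would use Conjecture \ref{conj:h} to make this correspondence bijective. By the conjecture, for $|\eta|$ small each branch satisfies $h_\eta=h+O(\eta^2)$, so the fixed-point condition becomes $\eta=h+O(\eta^2)$. For each small eigenvalue $h$ of ${C'}^{-1}(\tilde\lambda)C(\tilde\lambda)$ this equation has, by the implicit function theorem, a unique root $\eta=h+O(h^2)$, and conversely every small fixed point lies on the branch emanating from such an $h$. Hence the computed small $h$ and the true eigenvalues $\lambda^\ast=\tilde\lambda+\eta$ near $\tilde\lambda$ are in bijection. Completeness is precisely the surjectivity half: any true eigenvalue close enough to $\tilde\lambda$ is detected as a small $h$ and then recovered by iterating $\eta\mapsto h_\eta$.

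It then remains to cover the interval. I would fix a radius $\rho>0$, uniform over $\tilde\lambda\in[0,\tilde\lambda_{\text{max}}]$, within which the conjecture and hence the bijection hold; such a $\rho$ exists because $C,C',C''$ are analytic and the interval is compact, giving uniform bounds on $R(\tilde\lambda,\eta)$ and on ${C'}^{-1}(\tilde\lambda)$. If the trial values are spaced less than $\rho$ apart, then every $\lambda\in[0,\tilde\lambda_{\text{max}}]$ lies within $\rho$ of some $\tilde\lambda_j$; when $\lambda$ is an eigenvalue, the offset $h^\ast=\lambda-\tilde\lambda_j$ satisfies $|h^\ast|<\rho$, so $\lambda$ is among the small-$h$ solutions captured at $\tilde\lambda_j$. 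Since $\det C(\lambda)$ is analytic, it has only finitely many zeros in the compact interval, and this covering finds all of them.

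The hard part will be the uniform radius $\rho$. Conjecture \ref{conj:h} is asserted at a single trial value under unspecified ``suitable conditions,'' and the reduction above hinges on ${C'}^{-1}(\tilde\lambda)$ being controlled; wherever $C'(\tilde\lambda)$ approaches singularity the capture radius degenerates. I would therefore need either to show that $C'$ stays uniformly invertible along a suitable choice of trial values, or to treat the exceptional $\tilde\lambda$ with $\det C'(\tilde\lambda)=0$ separately by a higher-order linearisation. A secondary difficulty is that the conjecture as stated controls only branches admitting a Taylor expansion in $\eta$; one must know that the self-consistent branch through each true eigenvalue is of this type, which is exactly the regularity the authors flag as not yet evident.
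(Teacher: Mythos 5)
Your proposal follows essentially the same route as the paper: the paper's own (quite brief) argument also substitutes the self-consistency condition $\eta=h_\eta$ into \eqref{eq:perturbed} to get $C(\tilde\lambda+\eta)\vec\alpha_\eta=0$, invokes Conjecture \ref{conj:h} to solve $\eta=h+O(\eta^2)$ as $\eta=h+O(h^2)$, and then covers $[0,\tilde\lambda_{\text{max}}]$ by neighbourhoods of closely spaced trial values. Your version is in fact more careful than the paper's sketch --- making the converse (surjectivity) direction, the implicit-function-theorem step, and the uniform capture radius explicit, and flagging the genuine issues of $C'(\tilde\lambda)$ invertibility and Taylor-expandability that the paper leaves inside the conjecture's ``suitable conditions.''
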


\begin{rem}\label{rem:lambda}
  \begin{enumerate}
  \item
    Not each solution of
    $\big({C'}^{-1}(\tilde\lambda)C(\tilde\lambda)+hI\big)\vec\alpha=0$
    approximates a Maass form.
    Clearly, $|h|$ can be too large such that the linearisation becomes
    inappropriate.
    This is of no concern, since we do not need to consider Maass forms whose
    eigenvalues are further away than the next trial value $\tilde\lambda$.
  \item
    It can happen that a formal solution is {\em dependent} on $y$.
    This has to be checked explicitly by re-evaluating $C(\lambda)\vec a$ for
    different heights $y$ of the horocycle.
    Only if $C(\lambda)\vec a\simeq0$ holds for all $0<y<Y_0$, we have found a
    Maass cusp form.
    Typically, it is enough to (re-)evaluate for a few different, but well
    chosen values of $y$.
  \item
    The Laplacian on $X$ is an essentially self-adjoint operator.
    Hence, all eigenvalues $\lambda$ are real.
    Note, however, that the matrix $C(\tilde\lambda)$ is not hermite by
    construction.
    As such there is no guarantee that the linearisation of
    $C(\tilde\lambda+h)\vec a\simeq0$ results in real solutions.
    But, if it turns out that a solution becomes independent of $y$
    then the eigenvalue $\lambda=\tilde\lambda+h+O(h^2)$ becomes real, and
    the Maass form can be made real.
  \item
    It may happen that there is no eigenvalue $\lambda$ near a trial value
    $\tilde\lambda$.
    In this case, there are still $2M(Y_0)$ formal solutions of
    $\big({C'}^{-1}(\tilde\lambda)C(\tilde\lambda)+hI\big)\vec\alpha=0$, but
    $|h|$ is too large, or the formal solutions depend on the height $y$
    of the horocycle.
  \item\label{miss.1}
    According to Remark \ref{rem:y1}, a good choice for the height $y$ of the
    horocycle is cruical.
    In addition, we have to take care that $C'(\tilde\lambda)$ is regular
    and well-conditioned.
    However, it is hard to predict a good value for the height $y$ of the
    horocycle in advance.
    Moreover, a good choice of $y$ depends on $\lambda$.

    We fix some reasonable value of $y$ and search with this value
    of $y$ for Maass forms.
    Sometimes, our choice of $y$ may be good, and sometimes not.
    Whenever the choice of $y$ is not good, we may miss Maass forms.
    We compensate for this by taking another reasonable value of $y$ and
    run the algorithm again.
  \item
    It may happen that eigenvalues degenerate.
    In principle, this is no problem for our algorithm.
    However, the speed of numerical convergence and the numerically implied
    errors are strongly affected.
    Whenever a degenerated eigenvalue occurs, a good choice for the height $y$
    of the horocycle becomes even more important.
  \item
    One may argue that Maass cusp forms, if completely desymmetrised, are
    conjectured to be non-degenerated.
    However, the algorithm does not distinguish whether a degeneracy happens
    between Maass cusp forms, or whether the degeneracy is between a Maass
    cusp form and another formal solution of
    $\big({C'}^{-1}(\tilde\lambda)C(\tilde\lambda)+hI\big)\vec\alpha=0$.
    In particular, degeneracies with formal solutions happen frequently.
  \item\label{miss.2}
    In order to speed up the numerics, we do not take the trial values
    $0<\tilde\lambda_1<\tilde\lambda_2<\ldots<\tilde\lambda_{\text{max}}$
    to be sufficiently close together, and
    hence {\em intentionally} risk to oversee solutions.
    We compensate for this by using Weyl's average law and Turing bounds
    after the fact to figure out whether eigenvalues have been overseen.
    Any missing eigenvalues are eventually found with the aid of Algorithm
    \ref{alg:control}.
  \item
    For each trial value $\tilde\lambda$, we search for Maass forms in a
    neighbourhood.
    The neighbourhoods may overlap.
    Additionally, we may re-run the algorithm.
    For this reason, we typically find each Maass form more than once.
    Whenever we find a Maass form, we consider it only, if it does not match
    any previously found Maass form.
  \end{enumerate}
\end{rem}

\section{Turing bound}\label{Turing}

From Remark \ref{rem:lambda}\eqref{miss.1} and \ref{rem:lambda}\eqref{miss.2}
it is obvious that we may miss some eigenvalues.
In the current section, we use average Weyl's law and Turing bounds to
detect whether eigenvalues have been missed, and to figure out small intervals
where the missing eigenvalues are to be found.

Let $\mathcal{N}(t)=\#\{\lambda\ |\ 1/4\le\lambda\le t^2+1/4\}$ count the
number of eigenvalues which fall into the interval $[1/4,t^2+1/4]$.
Average Weyl's law reads
\begin{align*}
  \mathcal{N}(t)=\mathcal{M}(t)+S(t),
\end{align*}
where $\mathcal{M}(t)$ is a smooth approximation to $\mathcal{N}(t)$ such that
the average value of the Weyl remainder $S(t)$ becomes zero in the limit
$t\to\infty$,
\begin{align*}
  \langle S(t)\rangle:=\frac1t\int_0^tS(\tau)d\tau
  \overset{t\to\infty}{\longrightarrow}0.
\end{align*}

For Maass forms on certain hyperbolic surfaces, average Weyl's law has been
derived explicitly \cite{BookerStrombergsson,JorgensonSmajlovicThen2012}.
An important example is Weyl's average law on the modular surface.
\begin{thm}[Average Weyl's law on the modular surface
    \cite{BookerStrombergsson}]
  On the modular surface $X=\SL(2,\Z)\backslash\h$ we have for $t>0$:
  \begin{align*}
    \mathcal{M}(t)=\frac1{12}t^2
      -\frac{2t}\pi\log\frac{t}{e\sqrt{\pi/2}}-\frac{131}{144}.
  \end{align*}
\end{thm}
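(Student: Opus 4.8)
The plan is to read $\mathcal M(t)$ directly off the Selberg trace formula for $\Gamma=\SL(2,\Z)$, testing the formula against a family of even functions that approximate the sharp cut-off $\mathbf 1_{|r|\le t}$ (equivalently, smoothing against a mollifier and applying a Tauberian argument). Since $\mathcal M(t)$ is characterised among smooth approximants by the single requirement $\langle S(t)\rangle\to0$, it suffices to collect exactly those terms of the trace formula that are non-oscillatory — polynomial, $t\log t$, linear, and constant in $t$ — and to verify that everything else has vanishing Cesàro mean. The recurring subtlety is that $\mathcal N(t)$ counts only the discrete spectrum, whereas the trace formula naturally produces $\mathcal N(t)$ together with the winding of the scattering determinant, so the continuous spectrum must be accounted for explicitly.

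The identity contribution is the easy part: with $\vol(\F)=\pi/3$, the term $\tfrac{\vol(\F)}{4\pi}\int_{-t}^{t} r\tanh(\pi r)\,dr$ gives the leading $\tfrac1{12}t^2$, the correction being exponentially small because $\tanh(\pi r)=1-O(e^{-2\pi r})$. The heart of the matter is the continuous spectrum. For the modular surface the scattering determinant is $\varphi(s)=\xi(2s-1)/\xi(2s)$ with $\xi(s)=\pi^{-s/2}\Gamma(s/2)\zeta(s)$, and the functional equation gives $\varphi(\tfrac12+it)=\overline{\xi(1+2it)}/\xi(1+2it)$, whence $\arg\varphi(\tfrac12+it)=-2\arg\xi(1+2it)$. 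Feeding the cut-off into the continuous-spectrum term $\tfrac{1}{4\pi}\int_{-t}^{t}\tfrac{\varphi'}{\varphi}(\tfrac12+ir)\,dr=\tfrac{1}{2\pi}\arg\varphi(\tfrac12+it)+\text{const}$ and applying Stirling, $\arg\Gamma(\tfrac12+it)=t\log t-t+O(1)$, isolates the smooth part $-\tfrac1\pi t\log t+\tfrac1\pi t+\tfrac1\pi t\log\pi$. A second $-\tfrac1\pi t\log t$ (so that the coefficient becomes $-2/\pi$, the factor two tracing back to the doubled argument in $\varphi$) together with $-\tfrac1\pi t\log 2$ comes from the parabolic/cuspidal term: for one cusp it contributes $-\tfrac1{2\pi}\int_{-t}^{t}\psi(1+ir)\,dr$ plus a $\log2$-term, and $\psi(1+ir)=\log r+O(1/r)$ gives $-\tfrac1\pi t\log t+\tfrac1\pi t$. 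Summing the identity, scattering, and parabolic smooth parts yields
\begin{align*}
 \tfrac1{12}t^2-\tfrac2\pi t\log t+\tfrac2\pi\big(1+\tfrac12\log\tfrac\pi2\big)t
 =\tfrac1{12}t^2-\tfrac{2t}\pi\log\tfrac{t}{e\sqrt{\pi/2}},
\end{align*}
which is exactly the stated $t$-dependence.

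It remains to separate $\mathcal M$ from $S$ and to pin down the constant. The genuinely oscillatory contributions are the hyperbolic (length-spectrum) terms and the pieces $\arg\zeta(1+2it)$ and $\tfrac{\zeta'}{\zeta}(1\pm2it)$ carried along above; each has mean zero because $\Lambda(1)=0$ and $\zeta$ has no zero on $\mathrm{Re}=1$, so a standard mean-value estimate near the $1$-line shows their Cesàro average tends to $0$, placing them in $S(t)$ rather than $\mathcal M(t)$. The constant $-\tfrac{131}{144}$ is then assembled from the remaining $r$-independent pieces: the elliptic conjugacy classes of orders $2$ and $3$ attached to the fixed points $i$ and $\rho=e^{i\pi/3}$, the constant parts of the parabolic term, and the scattering value $\varphi(\tfrac12)=-1$ entering through $\tfrac14\varphi(\tfrac12)h(0)$.

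I expect the main obstacle to be precisely this bookkeeping of constants, rather than the leading asymptotics. Two points require real care: first, rigorously isolating the non-oscillatory part of the continuous and parabolic terms, which forces one to control $\arg\zeta$ and $\tfrac{\zeta'}{\zeta}$ on the line $\mathrm{Re}=1$ and is where genuine analytic number theory enters; and second, tracking every elliptic and scattering constant without sign or normalisation errors so that they combine to exactly $-\tfrac{131}{144}$. The defining property $\langle S\rangle\to0$ makes the split into $\mathcal M$ and $S$ unambiguous, but it does not spare the computation of the constant term.
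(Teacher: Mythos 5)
The paper itself contains no proof of this theorem: it is quoted from \cite{BookerStrombergsson} (in preparation), with \cite{JorgensonSmajlovicThen2012} cited as an alternative, so your proposal can only be judged against the approach of those references --- which is indeed a Selberg trace formula computation of the same shape as yours. Your treatment of the $t$-dependent terms is correct: the identity term gives $\tfrac1{12}t^2$, the scattering term via $\arg\varphi(\tfrac12+it)=-2\arg\xi(1+2it)$ and Stirling gives $-\tfrac1\pi t\log t+\tfrac1\pi t+\tfrac1\pi t\log\pi$, the cusp term gives $-\tfrac1\pi t\log t+\tfrac1\pi t-\tfrac1\pi t\log 2$, and these do assemble to $-\tfrac{2t}\pi\log\tfrac{t}{e\sqrt{\pi/2}}$; likewise your mean-zero arguments for $\arg\zeta(1+2it)$, $\tfrac{\zeta'}{\zeta}(1\pm 2it)$ and the length-spectrum terms are the right mechanism for splitting off $S(t)$.

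However, two specific claims in your constant bookkeeping are wrong, and they are exactly what separates your sketch from $-\tfrac{131}{144}$. First, the identity-term correction is \emph{not} exponentially small: although $1-\tanh(\pi r)$ decays exponentially pointwise, $\int_{-\infty}^{\infty}\big(|r|-r\tanh(\pi r)\big)\,dr=\tfrac1{12}$, so
\begin{align*}
  \frac{\vol(\F)}{4\pi}\int_{-t}^{t}r\tanh(\pi r)\,dr
  =\frac{t^2}{12}-\frac{1}{144}+O\!\left(te^{-2\pi t}\right).
\end{align*}
This $-\tfrac1{144}$ cannot be dropped: every other constant contribution (elliptic terms $\tfrac18$ and $\tfrac29$, the digamma constant $-\tfrac14$ from $\arg\Gamma(1+it)=t\log t-t+\tfrac\pi4+O(1/t)$, the $\pm\tfrac12,\pm\tfrac14$ pieces from $\varphi(\tfrac12)=-1$ and the winding-number endpoint) has denominator dividing $72$, while $131$ is coprime to $144$; so without the identity constant the stated value is unreachable. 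Second, you never account for the constant eigenfunction $\lambda_0=0$: the spectral side of the trace formula contains $h(i/2)$, which tends to $1$ for any admissible approximation of the cut-off (and equals $1$ exactly for heat-kernel test functions), whereas $\mathcal{N}(t)$ counts only $\lambda\ge\tfrac14$. This forces an extra $-1$ in $\mathcal{M}(t)$; it is a constant, so by your own criterion (nonvanishing Ces\`aro mean goes into $\mathcal{M}$) it cannot be relegated to $S(t)$. Quantitatively it dominates: the remaining pieces --- identity $-\tfrac1{144}$, elliptic $\tfrac18+\tfrac29=\tfrac{25}{72}$ (note the factor-two normalisation trap: $\tfrac14+\tfrac49$ is inconsistent with the stated constant), the winding-number endpoint $-\tfrac12$ coming from the pole of $\xi(1+2ir)$ at $r=0$ (this is the ``const'' you left unevaluated), the digamma constant $-\tfrac14$, and the $h(0)$-terms $+\tfrac12$ --- sum to $+\tfrac{13}{144}$, and only the $-1$ from $\lambda_0$ brings this to $-\tfrac{131}{144}$. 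As written, your accounting would therefore produce a constant of the wrong size and sign, which is a genuine gap since the exact constant is the entire content of the theorem beyond the classical Weyl asymptotics.
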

The proof is given in \cite{BookerStrombergsson}.
Another proof can be found in \cite{JorgensonSmajlovicThen2012}.

\begin{rem}
  For $X=\SL(2,\Z)\backslash\h$ there are no small eigenvalues $0<\lambda<1/4$,
  \cite{Roelcke1956,BookerStrombergssonVenkatesh2006}.
\end{rem}

Let $\mathcal{N}^{\text{num}}(t)$ count the number of numerically found
eigenvalues in the interval $[1/4,t^2+1/4]$.
The difference $\mathcal{N}^{\text{num}}(t)-\mathcal{N}(t)$ is a non-positive
integer whose absolute value counts the number of solutions which have been
overseen numerically.

As $\mathcal{N}(t)$ is unknown, we consider
$S^{\text{num}}(t)=\mathcal{N}^{\text{num}}(t)-\mathcal{M}(t)$, instead.
$S^{\text{num}}(t)$ is a fluctuating function around its average value,
$\mathcal{N}^{\text{num}}(t)-\mathcal{N}(t)$, see figure \ref{fig:S}.
\begin{figure}
  \includegraphics{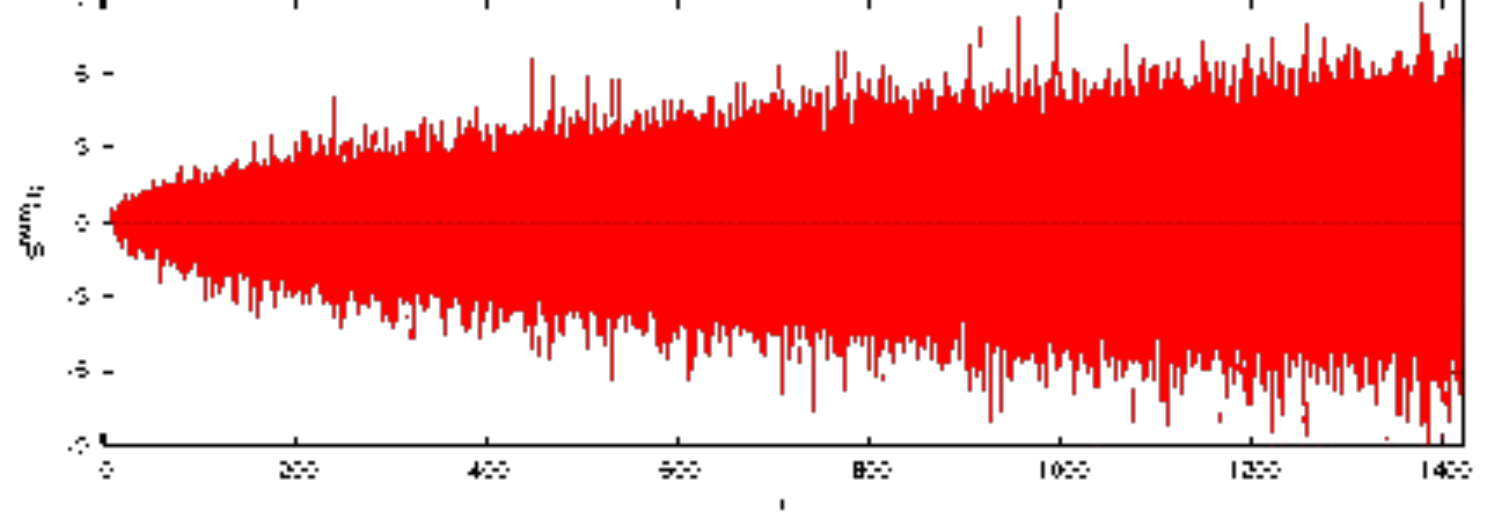}
  \caption{\label{fig:S}The fluctuations $S^{\text{num}}(t)$ for $\SL(2,\Z)$.}
\end{figure}
From the graph of
$\langle S^{\text{num}}(t)\rangle:=\frac1t\int_0^tS^{\text{num}}(\tau)d\tau$,
we can read off whether all eigenvalues have been found.
If all eigenvalues are found,
$\langle S^{\text{num}}(t)\rangle$ equals $\langle S(t)\rangle$ and tends
to zero in the limit of large $t$.
As soon as a solution gets overseen near $\lambda=r^2+1/4$ with $r\ge0$,
$\langle S^{\text{num}}(t)\rangle$ deviates significantly from
$\langle S(t)\rangle$ for $t>r$,
$\langle S^{\text{num}}(t)\rangle
\le\langle S(t)\rangle+\frac1t\int_r^t(-1)d\tau$.
This is visualised in figure \ref{fig:S-}, where we have intentionally removed
the eigenvalue $\lambda_{135916}=1/4+1300.0191255^2$.
From figure \ref{fig:S-}, we can read off that at least one eigenvalue is
missing and that the first eigenvalue which is missing is somewhere near
$\lambda=1/4+1300^2$.
\begin{figure}
  \includegraphics{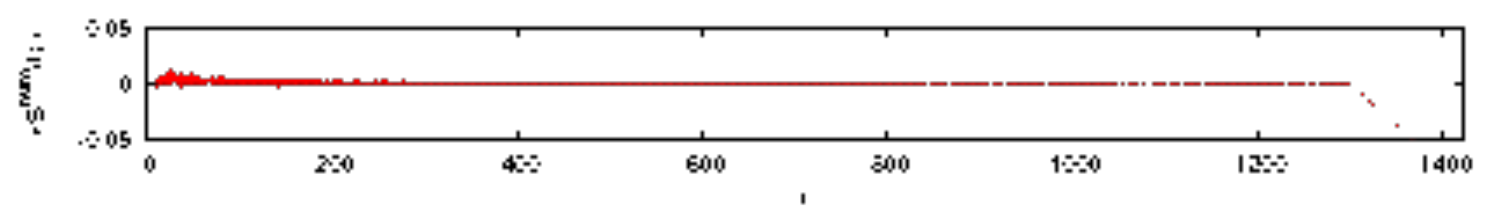}
  \caption{\label{fig:S-}Mean $\langle S^{\text{num}}(t)\rangle$, with the
    eigenvalue $\lambda_{135916}=1/4+1300.0191255^2$ removed.}
\end{figure}
\begin{figure}
  \includegraphics{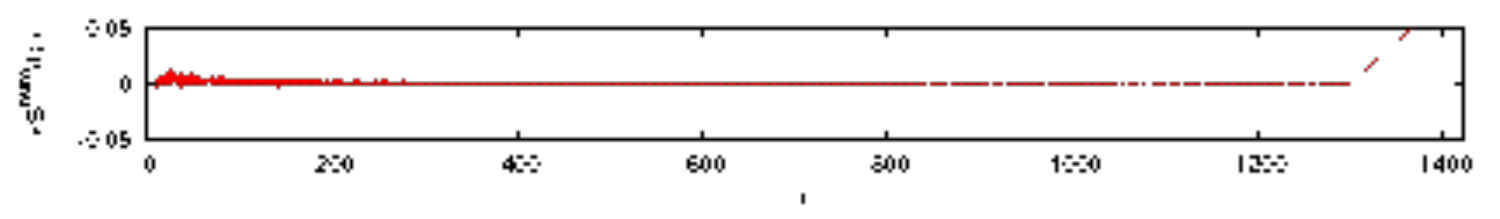}
  \caption{\label{fig:S+}Mean $\langle S^{\text{num}}(t)\rangle$, with the
    fake ``eigenvalue'' $\lambda_{\text{fake}}=1/4+1300^2$ inserted.}
\end{figure}

On the other hand, if we are in doubt whether a given list of eigenvalues is
consecutive, we can intentionally insert a fake ``eigenvalue'' near the end
of the list.
For instance, if we intentionally insert a fake ``eigenvalue''
$\lambda_{\text{fake}}=1/4+1300^2$ to the list of consecutive
eigenvalues, the graph of $\langle S^{\text{num}}(t)\rangle$ will clearly
show that there is one eigenvalue too much near $\lambda=1/4+1300^2$,
see figure \ref{fig:S+}.

If we know Turing bounds for $\langle S(t)\rangle$, the comparison of
$\langle S^{\text{num}}(t)\rangle$ against these bounds becomes conclusive.
For the modular group, Turing bounds have been derived explicitly
\cite{BookerStrombergsson}.
\begin{thm}[Turing bounds for the modular group \cite{BookerStrombergsson}]
  Consider the modular group $\Gamma=\SL(2,\Z)$, and define
  $E(t)=\big(1+6.59125/\log t\big)\big(\pi/(12\log t)\big)^2$.
  Then for $t>1$, we have the lower and upper Turing bounds
  \begin{align*}
    E_{\text{lower}}(t):=-2E(t)<\langle S(t)\rangle<E(t)=:E_{\text{upper}}(t).
  \end{align*}
\end{thm}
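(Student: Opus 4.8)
The plan is to prove the two-sided bound on the averaged Weyl remainder by transferring Turing's classical argument for the counting function of the Riemann zeros to the spectral setting of $\SL(2,\Z)\backslash\h$. The starting point is an explicit formula for $\mathcal N(t)$. On the modular surface the discrete eigenvalues $\lambda_n=1/4+r_n^2$ are exactly the points where the Selberg zeta function $Z(s)$ vanishes on the critical line $\Re s=1/2$, while the continuous spectrum is governed by the scattering determinant
\[
  \varphi(s)=\sqrt{\pi}\,\frac{\Gamma(s-1/2)}{\Gamma(s)}\,\frac{\zeta(2s-1)}{\zeta(2s)} .
\]
The Maass--Selberg relation (equivalently, the trace formula applied to a sharp cutoff) yields $\mathcal N(t)=\mathcal M(t)+S(t)$ with all smooth contributions collected into $\mathcal M(t)$, so that $S(t)=\tfrac1\pi\arg\Psi(1/2+it)$ for an explicit meromorphic $\Psi$ assembled from $Z$ and $\varphi$; its zeros are the spectral parameters $1/2\pm ir_n$, and its remaining zeros and poles are dictated by $\zeta$. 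Since the eigenvalues lie on a single vertical line just as the $\zeta$-zeros do, Turing's method applies.

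First I would integrate and apply Littlewood's lemma to the rectangle with vertical sides $\Re s=1/2$ and $\Re s=\sigma_0$ (for a parameter $\sigma_0>1$ to be optimised) and horizontal sides at heights $0$ and $t$. This converts $\int_0^t S(\tau)\,d\tau$ into a contour integral of $\log\Psi$, which splits into three explicit pieces: the integral of $\log|\Psi|$ over the two horizontal segments, the contribution of the right-hand side $\Re s=\sigma_0$ where $Z$ and $\varphi$ are close to their Euler-product limits, and the count of enclosed zeros and poles weighted by their distance to the critical line. Each piece is explicit because $Z$, $\varphi$, and hence $\Psi$, are.

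Then I would bound the horizontal log-integrals. To the right of $\Re s=1$ the factor $\varphi$ is controlled by explicit bounds on $\log|\zeta(2s-1)|$ and $\log|\zeta(2s)|$ coming from the Euler product together with a convexity estimate, while $Z(s)$ is bounded through its product over primitive hyperbolic conjugacy classes of $\SL(2,\Z)$, whose logarithm converges geometrically for $\Re s>1$ and admits an explicit tail. Letting $\sigma_0$ grow with $t$ and optimising produces the scale $\bigl(\pi/(12\log t)\bigr)^2$, in which the $1/12$-type constant reflects the area $\pi/3$ of the surface through $\mathcal M$, and the correction factor $1+6.59125/\log t$ is precisely the residue of the explicit constants from the $\zeta$- and geodesic-bounds.

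Finally, the asymmetry $E_{\text{lower}}(t)=-2E(t)$ against $E_{\text{upper}}(t)=E(t)$ I expect to originate in the one-sided (sawtooth) structure of $S$, which jumps up by one at every eigenvalue and decreases smoothly in between, together with the definite-sign contribution of the simple pole of $\varphi$ at $s=1$ (the pole of $\zeta(2s-1)$, i.e.\ the residual constant eigenfunction) and the pole of $\zeta(2s)$, all of which bias $\int_0^t S$ in one direction. The main obstacle will be the explicit bookkeeping rather than any new idea: turning the soft estimate $\langle S(t)\rangle=O\bigl((\log t)^{-2}\bigr)$ into the stated inequality with the precise constant $6.59125$ demands fully explicit upper bounds for $\log|\zeta|$ on vertical lines in the critical strip, an explicit handle on $\log|Z|$, and careful control of the low-lying $\zeta$-zeros near the contour, while pinning down the exact factor $2$ in the lower bound is the most delicate part of this accounting.
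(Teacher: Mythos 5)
A preliminary remark on the comparison itself: the paper contains no proof of this theorem; it states it and defers to the unpublished Booker--Str\"ombergsson manuscript, whose method (also adapted by the author with Jorgenson and Smajlovi\'c for moonshine groups) is the Selberg trace formula applied to explicit even test functions whose Fourier transforms are supported, or effectively concentrated, in an interval of length about $4\log t$. Your proposal instead transplants Turing's original contour-integral argument: Littlewood's lemma applied to a function $\Psi$ built from the Selberg zeta function $Z$ and the scattering determinant $\varphi$. This is a genuinely different route, and it has a gap that is not bookkeeping but a known analytic barrier.

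The gap is in the step ``bound the horizontal log-integrals.'' You control the contour only where Euler products converge (``to the right of $\Re s=1$''), but Littlewood's lemma with left edge on the critical line forces you to bound $\int_{1/2}^{1}\log|Z(\sigma+it)|\,d\sigma$ at the exact height $t$, and in that strip neither the product over primitive hyperbolic classes nor the $\zeta$-factors of $\varphi$ converge. For Riemann's $\zeta$, which has order $1$, one has $\log|\zeta(\sigma+it)|=O(\log t)$ throughout the strip, and that is precisely why Turing's method for $\zeta$ yields $\int_0^t S=O(\log t)$. The Selberg zeta function of $\SL(2,\Z)$ has order $2$, and the best unconditional (convexity-type) bounds give $\log|Z(\sigma+it)|\ll t^{2(1-\sigma)+o(1)}$; integrating over $\sigma\in[1/2,1]$ produces roughly $t/(2\log t)$, hence only $\langle S(t)\rangle=O(1/\log t)$ --- a full factor of $\log t$ short of the stated $E(t)\asymp(\pi/(12\log t))^2$, and with no usable explicit constant. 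Sending $\sigma_0\to\infty$ with $t$ cannot repair this, since the far right of the contour contributes exponentially little; the bottleneck is the strip, which your plan never addresses. The true origin of the scale $(\pi/(12\log t))^2$ is different: $1/12=\vol(\F)/(4\pi)$ is the Weyl density of the modular surface, and $2\log t$ is the longest admissible Fourier support of a trace-formula test function given that the geodesic (class-number) sums grow like $e^{\ell/2}$, with the square arising from the second-order (Fej\'er-type) smoothing error; balancing these two effects is exactly what a sharp-cutoff contour argument cannot do. Likewise, your explanation of the asymmetric factor $2$ in $E_{\text{lower}}=-2E$ (sawtooth shape of $S$, poles of $\varphi$) is not the actual mechanism, which in the trace-formula proof comes from the one-sided positivity of the majorant/minorant kernels used for the two directions of the inequality.
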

The proof is given in \cite{BookerStrombergsson}.

\begin{thm}[Turing's method \cite{Turing1953}]
  If we add a fake ``eigenvalue'', $\lambda_{\text{fake}}$,
  near the end of a list of eigenvalues, and if
  $\langle S^{\text{num}}(t)\rangle$ then exceeds the upper Turing bound,
  the list of eigenvalues is consecutive in the interval
  $1/4\le\lambda\le\lambda_{\text{fake}}$.
\end{thm}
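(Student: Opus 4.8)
The plan is to argue by contradiction, tracking how the single inserted fake ``eigenvalue'' and any genuinely overseen eigenvalues each perturb the averaged remainder. Write $\lambda_{\text{fake}}=t_0^2+1/4$, and let $m(t)$ denote the number of genuine eigenvalues in $[1/4,t^2+1/4]$ that are absent from our list. Since eigenvalues only enter the window as $t$ grows, $m$ is a non-negative, non-decreasing, integer-valued step function, and the list is consecutive in $1/4\le\lambda\le\lambda_{\text{fake}}$ precisely when $m(t_0)=0$. With the fake adjoined, the numerical counting function becomes $\mathcal{N}^{\text{num}}(t)=\mathcal{N}(t)-m(t)+\mathbf{1}_{\{t\ge t_0\}}$, hence $S^{\text{num}}(t)=S(t)-m(t)+\mathbf{1}_{\{t\ge t_0\}}$, and after averaging,
\begin{align*}
  \langle S^{\text{num}}(t)\rangle
  =\langle S(t)\rangle-\frac1t\int_0^t m(\tau)\,d\tau+\frac{(t-t_0)^+}{t}.
\end{align*}

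Now suppose, for contradiction, that the list is \emph{not} consecutive up to $\lambda_{\text{fake}}$, so that some genuine eigenvalue $\lambda=r^2+1/4$ with $\lambda\le\lambda_{\text{fake}}$ is overseen; since $\lambda_{\text{fake}}$ is generically chosen off the spectrum, $r<t_0$. Then $m(\tau)\ge1$ for all $\tau\ge r$, whence $\int_0^t m(\tau)\,d\tau\ge t-r$ for every $t\ge t_0$. For $t<t_0$ the fake contributes nothing and $\langle S^{\text{num}}(t)\rangle\le\langle S(t)\rangle<E_{\text{upper}}(t)$ automatically, so any exceedance must occur at some $t\ge t_0$, which is exactly the range I now control. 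There the fake's surplus is outweighed by the deficit it is racing against,
\begin{align*}
  \langle S^{\text{num}}(t)\rangle
  \le\langle S(t)\rangle+\frac{t-t_0}{t}-\frac{t-r}{t}
  =\langle S(t)\rangle+\frac{r-t_0}{t}
  <\langle S(t)\rangle
  <E_{\text{upper}}(t),
\end{align*}
the last step being the upper Turing bound. Thus $\langle S^{\text{num}}(t)\rangle<E_{\text{upper}}(t)$ for \emph{every} $t$, contradicting the hypothesis that the averaged numerical remainder exceeds the upper bound somewhere; therefore $m(t_0)=0$ and the list is consecutive in $1/4\le\lambda\le\lambda_{\text{fake}}$.

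The decisive point — and the only place the structure of the construction truly enters — is that an overseen eigenvalue begins to accumulate its $-1$ deficit at height $r$, \emph{strictly before} the fake begins to contribute its $+1$ surplus at height $t_0$; because $r<t_0$, the deficit keeps a permanent head start and dominates for all $t\ge t_0$. This is exactly why the fake must sit near the \emph{top} of the list, as a fake placed below a genuine gap could be masked by it. I expect the main obstacle here to be purely bookkeeping rather than analytic: fixing the convention for eigenvalues landing exactly on the window boundary, and excluding the degenerate coincidence $r=t_0$ by choosing $\lambda_{\text{fake}}$ off the spectrum (otherwise the fake silently replaces the missing eigenvalue in the count). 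The single nontrivial analytic input, the bound $\langle S(t)\rangle<E_{\text{upper}}(t)$, is supplied by the preceding Turing-bounds theorem and used here as a black box; note that only the upper bound is needed, the lower bound belonging to the complementary, missing-eigenvalue diagnostic rather than to this certification.
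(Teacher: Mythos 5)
Your proof is correct. There is nothing in the paper to compare it against directly: the paper states this theorem without proof and defers to Turing's 1953 paper. However, your argument is precisely the accounting mechanism the paper deploys immediately afterwards in its proof of Theorem \ref{thm:consecutive}: an overseen genuine eigenvalue at height $r$ contributes a deficit $-\frac{t-r}{t}$ to $\langle S^{\text{num}}(t)\rangle$, the inserted fake contributes a surplus $\frac{(t-t_0)^+}{t}$, and since $r<t_0$ the deficit keeps a permanent head start, so that under the contradiction hypothesis $\langle S^{\text{num}}(t)\rangle<\langle S(t)\rangle<E_{\text{upper}}(t)$ everywhere, contradicting the assumed exceedance. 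Your contrapositive packaging of this into a certification statement is exactly what Turing's method is, and your remark that only the \emph{upper} bound enters (the lower bound belonging to the missing-eigenvalue detector of Theorem \ref{thm:consecutive}) matches the paper's division of labour between the two theorems. Three minor points of bookkeeping: (i) your identity $\langle S^{\text{num}}(t)\rangle=\langle S(t)\rangle-\frac1t\int_0^t m(\tau)\,d\tau+\frac{(t-t_0)^+}{t}$ tacitly assumes that every list entry other than the fake is a genuine eigenvalue; this is the same standing assumption the paper makes when it asserts that $\mathcal{N}^{\text{num}}(t)-\mathcal{N}(t)$ is a non-positive integer. (ii) The strict bound $\langle S(t)\rangle<E_{\text{upper}}(t)$ is only available on the range of validity of the Turing bounds ($t>1$ for the modular group), so the hypothesised exceedance must be read as occurring in that range --- a harmless convention, since the fake sits near the end of the list. (iii) Your genericity assumption $r\ne t_0$ is actually unnecessary: if $r=t_0$ the same chain gives $\langle S^{\text{num}}(t)\rangle\le\langle S(t)\rangle<E_{\text{upper}}(t)$, which still excludes exceedance, the strictness being supplied by the Turing bound itself.
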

The importance of this theorem is highlighted in
\cite{Booker2006,HejhalOdlyzko2012}.
For the proof, see \cite{Turing1953}.

\begin{thm}\label{thm:consecutive}
  Let a list of eigenvalues be given for which we compute
  $\langle S^{\text{num}}(t)\rangle$.
  Let lower and upper Turing bounds be given such that
  $-1/2<E_{\text{lower}}(t)<\langle S(t)\rangle<E_{\text{upper}}(t)<1/2$
  $\forall t\ge t_0$, where $t_0>0$.
  Let
  \begin{align*}
    T:=\min\{\tau\ge t_0
    \ |\ \langle S^{\text{num}}(\tau)\rangle\le E_{\text{lower}}(\tau)\}
  \end{align*}
  and
  \begin{align*}
    t:=T\max\big(0,\langle S^{\text{num}}(T)\rangle-E_{\text{upper}}(T)+1\big).
  \end{align*}

  Then, we have:
  The list of eigenvalues is consecutive for $1/4\le\lambda<t^2+1/4$, but
  there is an eigenvalue missing in $t^2+1/4\le\lambda\le T^2+1/4$.
\end{thm}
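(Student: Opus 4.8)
The plan is to run Turing's argument quantitatively, tracking the single step function that records how many eigenvalues have been overseen. I would set $D(\tau):=\mathcal{N}(\tau)-\mathcal{N}^{\text{num}}(\tau)$. Since every numerically found eigenvalue is genuine, $\mathcal{N}^{\text{num}}\le\mathcal{N}$, so $D$ is a non-negative, non-decreasing, integer-valued step function whose value is the number of missed eigenvalues with parameter at most $\tau$. Subtracting $\mathcal{M}$ and averaging, linearity of the integral gives the basic identity $\langle S^{\text{num}}(\tau)\rangle=\langle S(\tau)\rangle-\langle D(\tau)\rangle$, where $\langle D(\tau)\rangle=\frac1\tau\int_0^\tau D(s)\,ds\ge0$. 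Everything reduces to estimating $\langle D(T)\rangle$ from both sides.

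First I would show that a missing eigenvalue exists and lies below $T$. By the definition of $T$ and the lower Turing bound, $\langle S^{\text{num}}(T)\rangle\le E_{\text{lower}}(T)<\langle S(T)\rangle$, so $\langle D(T)\rangle=\langle S(T)\rangle-\langle S^{\text{num}}(T)\rangle>0$. Hence $D$ is positive on a set of positive measure in $[0,T]$, and since $D$ is non-decreasing its first jump occurs at some $r_0<T$; this $r_0$ is the parameter of the first overseen eigenvalue.

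The crux is a matching lower bound on $\langle D(T)\rangle$ in terms of $r_0$. Because $D(s)\ge1$ for every $s\ge r_0$, we get $\langle D(T)\rangle\ge\frac1T\int_{r_0}^T1\,ds=1-\frac{r_0}{T}$. Feeding this into the identity together with the upper Turing bound $\langle S(T)\rangle<E_{\text{upper}}(T)$ yields $\langle S^{\text{num}}(T)\rangle<E_{\text{upper}}(T)-1+\frac{r_0}{T}$, which rearranges to $r_0>T\big(\langle S^{\text{num}}(T)\rangle-E_{\text{upper}}(T)+1\big)$. As $r_0>0$ as well, this gives $r_0\ge T\max\big(0,\langle S^{\text{num}}(T)\rangle-E_{\text{upper}}(T)+1\big)=t$; note that $\langle S^{\text{num}}(T)\rangle\le E_{\text{lower}}(T)<E_{\text{upper}}(T)$ forces $t<T$, so the localising interval is genuine. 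Combining $t\le r_0<T$ closes both assertions at once: since $r_0$ is the \emph{first} missed eigenvalue and $r_0\ge t$, no eigenvalue with $\lambda<t^2+1/4$ is overseen, so the list is consecutive there, while the eigenvalue at $r_0$ is missing with $t^2+1/4\le r_0^2+1/4<T^2+1/4$.

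The main obstacle is bookkeeping rather than estimation: one must justify that $D$ is non-negative and non-decreasing, i.e.\ that the found eigenvalues are all genuine so $\mathcal{N}^{\text{num}}\le\mathcal{N}$, and that the minimum defining $T$ is attained. This last point is exactly where the hypotheses $-1/2<E_{\text{lower}}$ and $E_{\text{upper}}<1/2$ enter: they give $E_{\text{upper}}-E_{\text{lower}}<1$, and once something is missing $\langle D(\tau)\rangle\ge1-r_0/\tau\to1$, so $\langle S^{\text{num}}(\tau)\rangle<E_{\text{upper}}(\tau)-\langle D(\tau)\rangle$ eventually falls below $E_{\text{lower}}(\tau)$; hence the crossing set is non-empty and $T<\infty$. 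Conversely, if nothing were missing then $\langle S^{\text{num}}\rangle=\langle S\rangle$ would remain strictly above $E_{\text{lower}}$ and no such $T$ would exist, so the mere existence of $T$ already forces a missing eigenvalue. I would finish by invoking continuity of $\tau\mapsto\langle S^{\text{num}}(\tau)\rangle-E_{\text{lower}}(\tau)$ on $[t_0,\infty)$ to confirm the minimum is achieved.
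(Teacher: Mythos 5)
Your proof is correct and takes essentially the same route as the paper's: both rest on the decomposition $\langle S^{\text{num}}(T)\rangle=\langle S(T)\rangle-\langle D(T)\rangle$, use the lower Turing bound at $T$ to force a missed eigenvalue with parameter $r<T$, and use the estimate $\langle D(T)\rangle\ge1-r/T$ together with the upper Turing bound to rearrange into $t\le r$, exactly as in the paper's chain $\langle S^{\text{num}}(T)\rangle\le\langle S(T)\rangle+\frac1T\int_r^T(-1)\,d\tau<E_{\text{upper}}(T)+\frac{r-T}{T}$. Your explicit bookkeeping with the deficiency function $D$, the strictness $r<T$, and the attainment of the minimum defining $T$ only formalizes points the paper treats implicitly (the paper instead settles existence of $T$ by noting the finite list forces $\langle S^{\text{num}}(t)\rangle$ below $E_{\text{lower}}(t)$ for large $t$).
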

\begin{proof}
  Since there are countably many eigenvalues, the given finite list cannot
  be complete,
  $\langle S^{\text{num}}(t)\rangle\overset{t\to\infty}{\longrightarrow}
  \mathcal{N}^{\text{num}}(t)-\mathcal{N}(t)\le-1<E_{\text{lower}}(t)$.
  $T$ exists and we have
  $\langle S^{\text{num}}(T)\rangle\le E_{\text{lower}}(T)<\langle S(T)\rangle$,
  which implies that the list of eigenvalues is {\em not} consecutive for
  $1/4\le\lambda\le T^2+1/4$.

  If $\lambda=r^2+1/4$ is the first missed eigenvalue, we have
  $\langle S^{\text{num}}(T)\rangle
  \le\langle S(T)\rangle+\frac1T\int_r^T(-1)d\tau
  <E_{\text{upper}}(T)+\frac{r-T}T$.
  Hence,
  $\langle S^{\text{num}}(T)\rangle\ge E_{\text{upper}}(T)+\frac{t-T}T$
  implies $t<r$ and that all eigenvalues have been found for
  $1/4\le\lambda<t^2+1/4$ if $t\ge0$.
\end{proof}

\begin{alg}[The control algorithm]\label{alg:control}
  Let $\Lambda$ be a list of numerically found Maass cusp forms.
  Typically, $\Lambda$ is empty in the beginning.
  \begin{enumerate}
  \item Let all parameters take reasonable values.
  \item Determine $t$ and $T$ according to Theorem \ref{thm:consecutive}.
  \item\label{alg:control.3} Let
    $t^2+1/4=\tilde\lambda_1<\tilde\lambda_2<\ldots<\tilde\lambda_\nu=T^2+1/4$
    be trial values, and search for eigenvalues in the neighbourhood of each
    trial value.
  \item Use Algorithm \ref{alg:phase1} to compute the corresponding Maass
    cusp forms.
  \item For each found Maass form, compare whether it is already in the list
    $\Lambda$.
    If not, add it to the list.
  \item Let $t_{\text{old}}:=t$ and $T_{\text{old}}:=T$.
  \item Determine $t$ and $T$ according to Theorem \ref{thm:consecutive}.
  \item If $t$ is equal to $t_{\text{old}}$ and $T$ is equal to
    $T_{\text{old}}$, then increase $\nu$ and change all parameters to
    somewhat different reasonable values.
    Otherwise, decrease $\nu$ slightly in order to speed up the numerics.
  \item Continue with \ref{alg:control.3}
  \end{enumerate}
\end{alg}

Theoretically, Algorithm \ref{alg:control} runs forever and finds an ever
increasing list of consecutive Maass cusp forms.

For groups other than $\SL(2,\Z)$, Turing bounds have not been derived, yet.
It is possible to replace Turing bounds by heuristic bounds
which follow from {\em intentionally} removing and inserting eigenvalues near
the end of an almost consecutive list of eigenvalues, as was demonstrated in
figures \ref{fig:S-} and \ref{fig:S+}.
However, this requires user interaction and slows down the algorithm.
Results for certain moonshine groups are published in
\cite{JorgensonSmajlovicThen2012}.

\section{Fluctuations in the Weyl remainder}\label{Results}

Here, we present results for the modular group $\Gamma=\SL(2,\Z)$.
It is worth while to desymmetrise the Maass cusp forms, as this speeds up
their computation.
On the modular surface $X=\Gamma\backslash\h$, Maass cusp forms fall into
even, $f(-x+iy)=f(x+iy)$, and odd, $f(-x+iy)=-f(x+iy)$, eigenfunctions.
Letting Algorithm \ref{alg:control} run for some days, we have found
$80\,443$ consecutive even Maass cusp forms with $1/4\le\lambda<1421.98^2+1/4$,
but missed $1$ out of $81\,673$ with $1/4\le\lambda<1432.64^2+1/4$ by the
time when we interrupted the algorithm.
For the odd symmetry, the algorithm was faster.
In the same time, we have found $164\,232$ consecutive odd Maass cusp forms
with $1/4\le\lambda<2000^2+1/4$.
The amount of data allows us to investigate the asymptotic and statistic
properties of the fluctuations in the Weyl remainder $S(t)$, numerically.

From Rudnick \cite{Rudnick2005} we know that the number of eigenvalues
obeys a central limit theorem.
In particular, he considers smoothed windows of length $1/L$ centered
at points $t$, and examins the number $N_L(t)$ of $r_j$ that
lie within each such window.
In the limit $L\to\infty$ and $T\to\infty$ but $L=o(\log T)$, the
distribution of $N_L(t)$ as $t$ varies through $[T,2T]$ approaches
a Gaussian,
\begin{align*}
  \lim_{T\to\infty}\frac1T\meas\{t\in[T,2T]:
  \frac{N_L(t)-\overline{N_L}(t)}{\sigma_L}<x\}
  =\int_{-\infty}^xe^{-u^2/2}\frac{du}{\sqrt{2\pi}}.
\end{align*}
We ask whether the fluctuations in the Weyl remainder (without smoothing)
also obey a central limit theorem.

In view of figure \ref{fig:S}, we note that the magnitude of the remainder
fluctuations grows in $t$, and we may better consider a scaled version of
the Weyl remainder.
From Li and Sarnak \cite{LiSarnak2004} we know a lower bound on the asymptotic
growth rate of the remainder fluctuations,
\begin{align*}
  S(t)=\Omega\Big(\frac{t^{1/2}}{\log t}\exp\big(\tfrac12(\log\log t)^{5/17}
  \big)\Big).
\end{align*}
Figure \ref{fig:lower_bound} shows how close this lower bound
is to the true asymptotic growth rate.
Concerning an upper bound, nothing better than $S(t)=O(t/\log t)$ is known
analytically.
\begin{conj}\label{conj:upper-bound}
  On the modular surface $X=\SL(2,\Z)\backslash\h$, the asymptotic
  growth rate of the remainder fluctuations is bounded from above by
  \begin{align*}
    S(t)=o(\sqrt t).
  \end{align*}
\end{conj}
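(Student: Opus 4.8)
The plan is to route everything through the Selberg trace formula for $\SL(2,\Z)$, which recasts the conjecture as a cancellation problem for a sum over closed geodesics. Applying the trace formula to a family of even test functions whose transforms approximate the sharp counting of the spectral parameters $r_j=\sqrt{\lambda_j-1/4}$ up to height $t$ yields an explicit formula of Riemann--von Mangoldt type for $\mathcal{N}(t)$, and hence for $S(t)$. In this formula the continuous spectrum enters only through the scattering determinant $\varphi(s)=\sqrt{\pi}\,\Gamma(s-\tfrac12)\,\Gamma(s)^{-1}\,\zeta(2s-1)\,\zeta(2s)^{-1}$, contributing a term $-\tfrac1\pi\arg\varphi(\tfrac12+it)$. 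On the critical line the Gamma factors produce only smooth winding already absorbed into $\mathcal{M}(t)$, while the fluctuating part reduces, via the functional equation, to the argument of the Riemann zeta function near $\Re(s)=1$ and is therefore $O(\log\log t)$. The continuous spectrum is thus negligible against $\sqrt t$, and the entire weight of the conjecture falls on the discrete part, i.e.\ on $\tfrac1\pi\arg Z(\tfrac12+it)$, where $Z$ is the Selberg zeta function.

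Next I would make the geodesic side explicit. The explicit formula writes the fluctuating part as an oscillatory sum
\[
 S(t)=\frac1\pi\sum_{\ell_\gamma\le L}b_\gamma\cos(t\,\ell_\gamma)+E(t,L)+O(\log\log t),\qquad |b_\gamma|\ll\ell_\gamma\,e^{-\ell_\gamma/2},
\]
where $\gamma$ runs over closed geodesics of length $\ell_\gamma$, $L$ is a truncation height, and $E(t,L)$ is the smoothing/truncation error. For $\SL(2,\Z)$ the length spectrum is arithmetic: $2\cosh(\ell_\gamma/2)=n$ is an integer trace, the number of primitive classes of trace $n$ is the class number $h(d)$ of the order of discriminant $d=n^2-4$, and $\sum_{n\le N}h(n^2-4)\asymp N^2/\log N$ by the prime geodesic theorem. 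The classical remainder bound $S(t)=O(t/\log t)$ corresponds to taking $L\asymp\log t$ and controlling the explicit formula only through the error $E(t,L)\asymp t/L$, leaving the oscillatory main sum entirely unexploited. Since all $h(d)$ geodesics of a common length share the identical phase $\cos(t\,\ell_\gamma)$, regrouping by $n$ turns the main term into a single class-number-weighted exponential sum $\sum_n h(n^2-4)\,c_n\cos(t\,\ell_n)$, and reaching $o(\sqrt t)$ demands genuine cancellation in this sum.

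A softer intermediate target would be the mean square $\tfrac1T\int_1^T S(t)^2\,dt$, obtained by squaring the explicit formula: the diagonal $\gamma=\gamma'$ produces a main term governed, after regrouping, by a second moment of class numbers, equivalently of $L(1,\chi_d)$, while the off-diagonal should be lower order by non-resonance of distinct lengths. Establishing that this variance is $o(T)$ -- which the arithmetic second-moment estimates make plausible -- would give $S(t)=o(\sqrt t)$ in mean square. The obstruction is that a mean-square statement does not imply the pointwise growth rate in the conjecture: the two differ exactly in the extreme-value regime, and here the heuristics sit on the boundary. The correlation length of $S(t)$ in $t$ is $\sim 1/L$, so that over $[T,2T]$ there are of order $T\log T$ effectively independent samples, and a Gaussian tail would already predict a maximum of order $\sqrt T$ -- precisely the threshold the conjecture asks one to undercut.

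I expect the main obstacle to be genuinely arithmetic and twofold. First, to go below $t/\log t$ one is forced to take $L$ far larger than $\log t$ (indeed $L\gg\sqrt t$ to render $E(t,L)=o(\sqrt t)$), at which point the geodesic sum ranges over $\sim e^L$ terms; the tension between suppressing the smoothing error and taming the exponential proliferation of geodesics is exactly what no soft method resolves, and is the reason the barrier has stood at $t/\log t$. Second, the required cancellation in $\sum_n h(n^2-4)\,e^{it\ell_n}$ is governed, through the class number formula $h(d)\log\epsilon_d=\sqrt{d}\,L(1,\chi_d)$, by the largest values of $L(1,\chi_d)$ and the smallest regulators $\log\epsilon_d$: these are the discriminants that resonate with a near-stationary phase and drive the Li--Sarnak $\Omega$-bound up to nearly $\sqrt t$. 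I would expect to control the first horn only conditionally, under the Riemann Hypothesis for the real-character $L$-functions $L(s,\chi_d)$, and the second -- ruling out the alignment of these class-number spikes with the arithmetic phases for a full-density set of $t$ -- to be the true heart of the matter. It is this borderline extreme-value behaviour at the $\sqrt t$ scale, rather than any single missing estimate, that keeps the statement a conjecture.
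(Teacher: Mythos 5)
The first thing to get straight is that the statement you were given is Conjecture~\ref{conj:upper-bound} of the paper: the paper does \emph{not} prove it, and no proof is known. Its entire support there is empirical --- figure~\ref{fig:upper_bound}, where the magnitude of $\frac{1}{\sqrt t}|S(t)|$ slowly decreases over the computed range, together with the closing discussion of section~\ref{Results}, where the numerically measured extreme-value scaling $\limsup_{t\to\infty}\left(\frac{\log\log t}{2t}\right)^{1/2}|S(t)|\approx2.5\sigma$ being a finite constant is consistent with extremes of size $O\big(\sqrt{t/\log\log t}\big)=o(\sqrt t)$, and where the author argues that the Poisson level statistics expected for arithmetic surfaces favour the law-of-the-iterated-logarithm scaling over the $\sqrt{\log t}$ scaling believed to hold for the Riemann zeta $S(t)$. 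Your write-up is therefore not a proof and cannot be judged against one; to your credit, you say so explicitly. As a map of why the trace-formula route stalls it is largely accurate: the scattering contribution for $\SL(2,\Z)$ is indeed negligible, the unconditional barrier $O(t/\log t)$ does come from truncating the geodesic sum at length $\asymp\log t$, beating $\sqrt t$ does force exponentially many terms, and the regrouping by traces with class-number weights $h(n^2-4)$, via $h(d)\log\epsilon_d=\sqrt d\,L(1,\chi_d)$, is exactly the mechanism behind the Li--Sarnak $\Omega$-bound quoted in the paper.

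Two concrete corrections, though. First, your ``softer intermediate target,'' the mean square of $S(t)$, is essentially the content of the Li--Sarnak number-variance work the paper cites and of Rudnick's central limit theorem cited in section~\ref{Results}; in the smoothed regime it is not open, and it is what motivates the paper's Conjecture~\ref{conj:distr} (typical size $\sigma\sqrt t/\log\log t$). The genuinely open content of Conjecture~\ref{conj:upper-bound} is purely the extreme-value regime, as you suspect. Second, your Gaussian-tail heuristic is misstated in a way that matters: with $\asymp T\log T$ effectively independent samples of typical size $\sigma\sqrt T/\log\log T$, the predicted maximum is of order $\sqrt{T\log T}/\log\log T$, which is not ``of order $\sqrt T$'' but asymptotically \emph{larger}; taken at face value, this heuristic predicts the conjecture is false. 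That is precisely the tension the paper confronts in its final paragraphs: a Farmer--Gonek--Hughes-type $\sqrt{\log t}$ gap between typical and extreme values (expected for GUE-type statistics, as for Riemann zeros) would violate $o(\sqrt t)$, while the milder $\sqrt{\log\log t}$ gap of the iterated-logarithm heuristic, which the author adopts on numerical grounds for the Poissonian modular spectrum, is what makes $o(\sqrt t)$ tenable. So the sharp open question is not cancellation in the geodesic sum per se, but which extreme-value regime governs $S(t)$; any proof would have to rule out the heavier tail, and nothing currently known --- conditional or not --- does that.
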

Numerical evidence for the conjecture is displayed in figure
\ref{fig:upper_bound}, where the magnitude of fluctuations of
$\frac1{\sqrt t}|S(t)|$ slightly decreases with $t$.
\begin{figure}
  \includegraphics{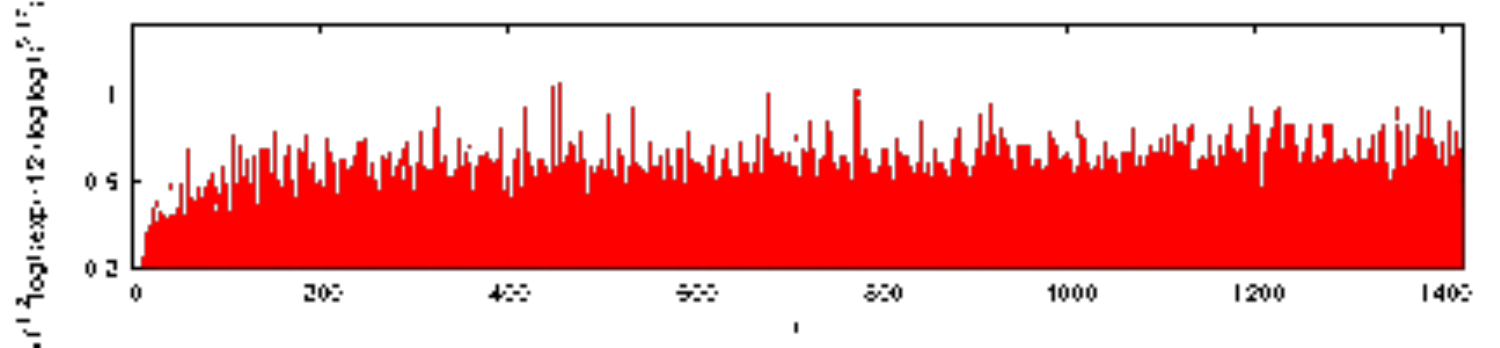}
  \caption{\label{fig:lower_bound}A graph of
    $\frac{\log t}{\sqrt t}\exp\big(-\tfrac12(\log\log t)^{5/17}\big)|S(t)|$
    whose magnitude of fluctuations slightly increases with $t$.
    This eventually lets the $\limsup$ of the graph diverge in the limit
    $t\to\infty$.
    The graph also shows how close the lower bound of Li and Sarnak comes
    to the true asymptotic growth rate.}
\end{figure}
\begin{figure}
  \includegraphics{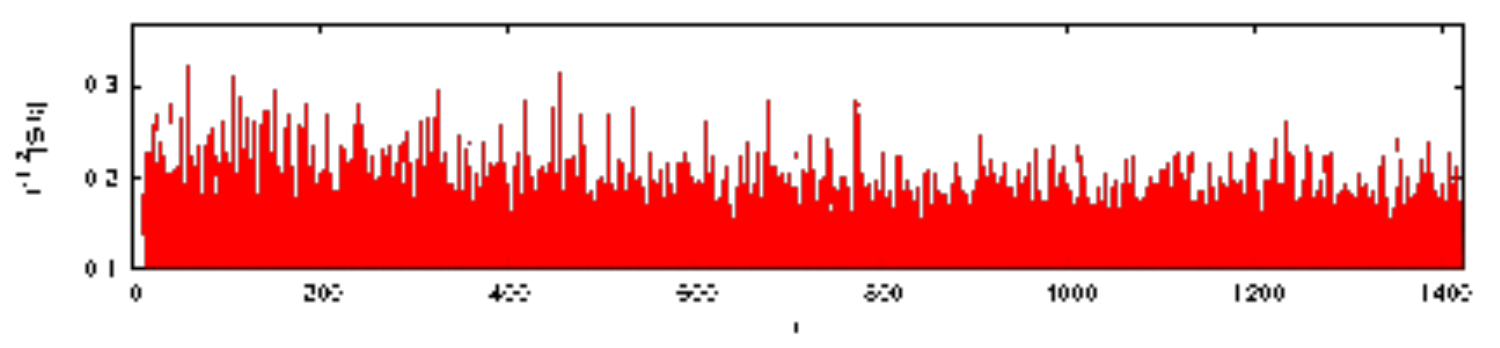}
  \caption{\label{fig:upper_bound}A graph of $\frac1{\sqrt t}|S(t)|$
    whose magnitude of fluctuations slightly decreases with $t$.
    This eventually lets the graph vanish in the limit $t\to\infty$.}
\end{figure}

\begin{conj}\label{conj:distr}
  Consider the fluctuations in the Weyl remainder $S(t)$ on the modular
  surface $X=\SL(2,\Z)\backslash\h$.
  If scaled by $\frac{\log\log t}{\sqrt t}$,
  the fluctuations obey a central limit theorem.
  For any ${\mathcal L}>0$, we have
  \begin{align*}
    \lim_{T\to\infty}\frac1{\mathcal L}\meas\{t\in[T,T+{\mathcal L}]:
    \frac{\log\log t}{\sqrt t}\frac{S(t)}\sigma<x\}
    =\int_{-\infty}^xe^{-u^2/2}\frac{du}{\sqrt{2\pi}},
  \end{align*}
  with $\sigma\approx0.140$.
\end{conj}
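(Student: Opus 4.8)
The plan is to reduce the statement to a central limit theorem for an arithmetically structured almost periodic function extracted from the Selberg trace formula. Applying the trace formula on $X=\SL(2,\Z)\backslash\h$ to a family of test functions approximating the indicator of $[0,t]$, one writes $S(t)=S_{\mathrm{cont}}(t)+S_{\mathrm{geo}}(t)+o(1)$, where $S_{\mathrm{cont}}$ is the fluctuation of the winding of the scattering phase and $S_{\mathrm{geo}}$ is the oscillatory sum over the hyperbolic conjugacy classes. For $\SL(2,\Z)$ the scattering phase is governed by $\zeta$, so by Selberg's central limit theorem for $\arg\zeta$ the part $S_{\mathrm{cont}}(t)$ is typically of size at most $(\log\log t)^{1/2}$; after multiplication by $\frac{\log\log t}{\sqrt t}$ this is $O((\log\log t)^{3/2}/\sqrt t)\to0$. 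Hence the scaled continuous part is negligible and the limiting law must be driven entirely by $S_{\mathrm{geo}}$.

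First I would make the geometric sum explicit. The hyperbolic term contributes
\[
  S_{\mathrm{geo}}(t)\approx\frac1\pi\sum_{\{\gamma\}}\frac{\ell_{\gamma_0}}{2\sinh(\ell_\gamma/2)}\,\frac{\sin(t\ell_\gamma)}{\ell_\gamma},
\]
up to a controlled remainder. On the modular surface the length spectrum is highly degenerate: the lengths are $\ell=2\log\epsilon_d$ with $\epsilon_d$ the fundamental automorph attached to a discriminant $d$, and the number of classes of a given length is the class number $h(d)$. Geodesics sharing a length share a phase and therefore add \emph{coherently}, so $S_{\mathrm{geo}}(t)$ reorganises into a sum over $d$ with amplitude proportional to $h(d)$ times an exponentially small geometric factor. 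I would then prove the limit by the method of moments: show that, for $t$ distributed in $[T,T+\mathcal L]$ with $\mathcal L$ fixed and $T\to\infty$, the moments of $\frac{\log\log t}{\sqrt t}\frac{S(t)}{\sigma}$ converge to $(2m-1)!!$ in even order and to $0$ in odd order. Over such a short window this relies on the fact that the frequencies $2\log\epsilon_d$ below the Heisenberg cutoff $\ell_\gamma\lesssim 2\log t$ grow in number and are rationally independent apart from sparse resonances, so the high-frequency terms decorrelate and the diagonal pairings produce exactly the Gaussian combinatorics.

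The variance is the crux of the normalisation. The diagonal part of the second moment is a sum of squared amplitudes which, because of the exponential degeneracy $g(\ell)\sim e^{\ell/2}/(\ell/2)$, is enhanced relative to a generic chaotic surface and grows linearly in $t$; inserting the class number formula $h(d)\log\epsilon_d=\sqrt d\,L(1,\chi_d)$ converts the diagonal into a mean value of $L(1,\chi_d)^2$ over $d$ up to the cutoff. Evaluating this mean value, together with the precise weight of the terms nearest the cutoff, should yield $\langle S(t)^2\rangle\sim\sigma^2\,t/(\log\log t)^2$, which simultaneously fixes the scaling $\frac{\log\log t}{\sqrt t}$ in the conjecture and, through the resulting arithmetic Euler product, identifies the constant $\sigma\approx0.140$.

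The hard part will be twofold. The analytic obstacle is the truncation of the geodesic sum at the Heisenberg length: making the trace formula effective up to $\ell_\gamma\sim 2\log t$ and controlling the tail is as demanding as a strong prime geodesic theorem and ultimately interlocks with the size of $S_{\mathrm{cont}}$, hence with zeros of $\zeta$. The arithmetic obstacle, and the reason the statement remains conjectural, is the control of the off-diagonal moment terms: one must show that nontrivial coincidences $\pm2\log\epsilon_{d_1}\pm\cdots\pm2\log\epsilon_{d_k}\approx0$ contribute negligibly, that is, bound the additive energy of the multiset $\{\log\epsilon_d\}$, for which no adequate tools are currently available. Finally, pinning the exact power $\log\log t$ and the value $\sigma\approx0.140$ requires second-moment asymptotics for $L(1,\chi_d)$ sharp enough to resolve the secondary $\log\log$ factor, which is precisely the delicate regime that the numerics behind Conjecture \ref{conj:upper-bound} and the present conjecture are designed to probe.
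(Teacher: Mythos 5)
There is nothing to compare your proposal against: the statement is Conjecture \ref{conj:distr} of the paper, and the paper does not prove it. Its entire support is numerical --- the histogram of $\frac{\log\log t}{\sqrt t}S(t)/\sigma$ against a standard Gaussian in figure \ref{fig:S_distr}, and the windowed mean/variance computations of figure \ref{fig:sigma}, from which the paper explicitly says the scale factor $\frac{\log\log t}{\sqrt t}$ and the constant $\sigma\approx0.140$ ``result from numerical considerations''. Judged as a proof, your proposal has genuine gaps, and to your credit you name them yourself: the two steps that carry all the weight --- (i) showing that off-diagonal terms, i.e.\ near-resonances $\pm\log\epsilon_{d_1}\pm\cdots\pm\log\epsilon_{d_k}\approx0$, contribute negligibly (a bound on the additive energy of $\{\log\epsilon_d\}$ for which, as you say, no tools exist), and (ii) the variance asymptotics that would produce the normalisation and the value of $\sigma$ --- are asserted, not executed. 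A plan whose crucial steps are acknowledged to be out of reach is a research program, not a proof; this is exactly why the statement is a conjecture in the paper.

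Beyond the acknowledged gaps, two of your concrete claims are shaky. First, your variance heuristic does not obviously give what you say it gives: with degeneracy $g(\ell)\sim e^{\ell/2}/\ell$, coherent amplitude $\asymp L(1,\chi)/\ell$ per distinct length, and $\asymp e^{L/2}$ distinct lengths up to the cutoff $L\sim2\log t$, the diagonal sum comes out $\asymp t/(\log t)^2$, not $\sigma^2 t/(\log\log t)^2$; since the second moment of $L(1,\chi_d)$ is a constant, inserting the class number formula does not by itself convert a $\log$ into a $\log\log$. So the very normalisation you claim to derive would, on the diagonal alone, contradict the conjectured scaling --- the $\log\log t$ (if it is even correct; the paper itself notes the tension with the iterated-logarithm heuristic) must come from correlations that your method discards. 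Second, the window structure is wrong for your method: the conjecture demands convergence over windows $[T,T+\mathcal L]$ of \emph{fixed} length $\mathcal L$, whereas moment/decorrelation arguments of the kind you sketch (including Rudnick's theorem, which the paper quotes) operate on dyadic windows $[T,2T]$ or windows growing with $T$; over a fixed window the low-frequency geodesic terms are essentially constant and the diagonal-pairing combinatorics collapses. Also, a minor point: the continuous-spectrum fluctuation for $\SL(2,\Z)$ is governed by $\arg\zeta$ on the line $\Re(s)=1$ (the scattering determinant involves $\zeta(2s)$ at $s=\tfrac12+ir$), so Selberg's central limit theorem on the critical line is not the relevant input --- though your conclusion that this term is negligible after scaling by $t^{-1/2}$ is correct.
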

A histogram of the scaled distribution is shown in figure \ref{fig:S_distr}.
\begin{figure}
  \includegraphics{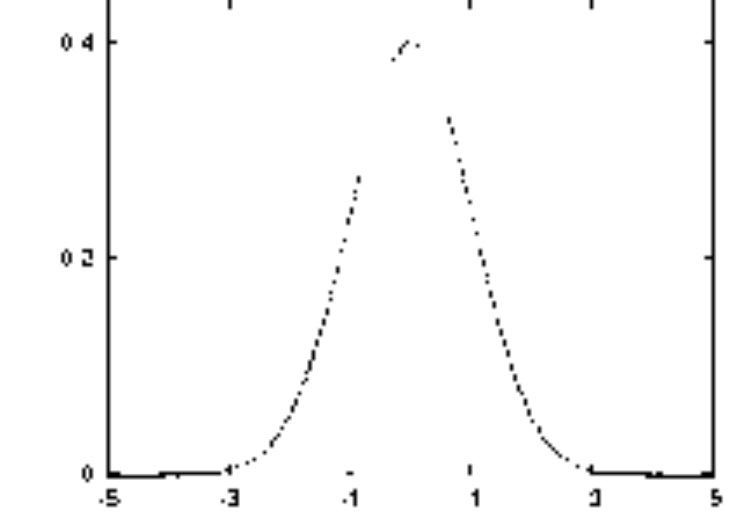}
  \caption{\label{fig:S_distr}Distribution of
    $\frac{\log\log t}{\sqrt t}\frac{S(t)}\sigma$ (solid curve)
    as $t$ varies through $[0,1421.98]$,
    in comparison with the standard Gaussian (dashed curve).}
\end{figure}
The scale factor $\frac{\log\log t}{\sqrt t}$ results from numerical
considerations.
Namely, if we examine $S(t)$ in the interval $[a,b]$, the mean and the
variance of the scaled fluctuations,
\begin{align*}
  &\mu_{[a,b]}=\frac1{b-a}\int_a^b\frac{\log\log t}{\sqrt t}S(t)dt,\\
  &\sigma_{[a,b]}^2=\frac1{b-a}\int_a^b\Big(\frac{\log\log t}{\sqrt t}S(t)
  -\mu_{[a,b]}\Big)^2dt,
\end{align*}
are numerically independent of $t$.
Quantitative results for $\mu_{[t-100,t+100]}$ and $\sigma_{[t-100,t+100]}$ in
dependence of $t$ are shown in figure \ref{fig:sigma}.
We find that the mean vanishes $\mu\approx0$ and that the standard
deviation is constant $\sigma\approx0.140$.
\begin{figure}
  \includegraphics{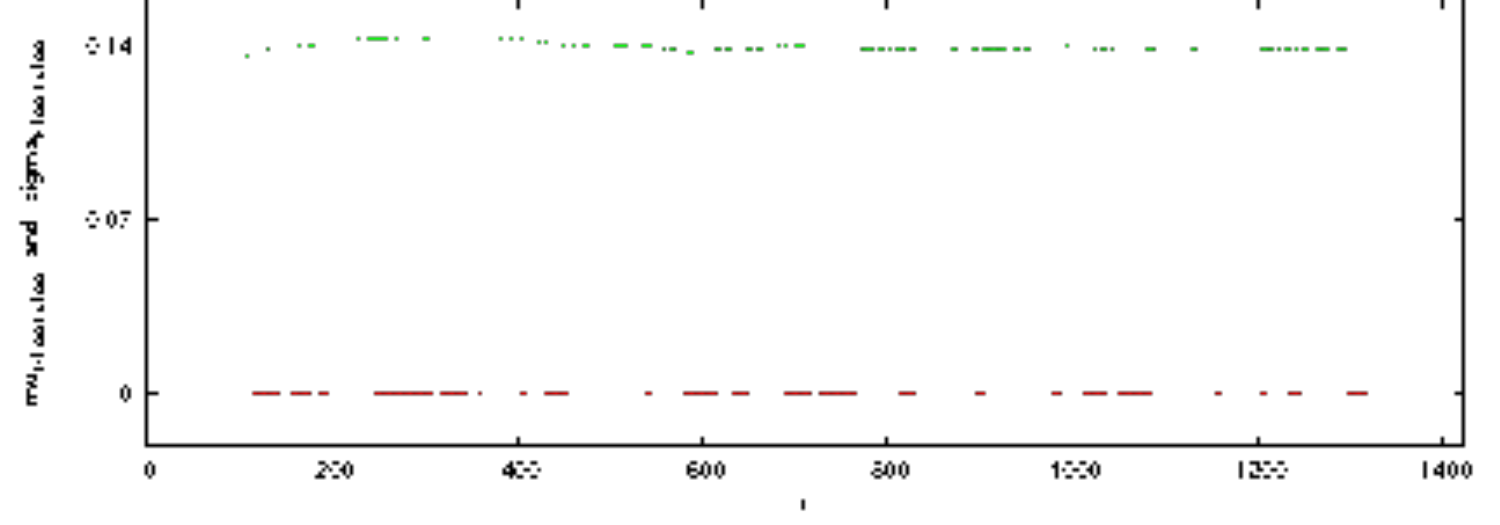}
  \caption{\label{fig:sigma}Plots of the mean $\mu_{[t-100,t+100]}$ and the
    standard deviation $\sigma_{[t-100,t+100]}$ in dependence of $t$.}
\end{figure}
%\begin{table}
%  \begin{tabular}{rrrr}
%    $a$\quad\mbox{} & $b$\quad\mbox{} & $\mu_{[a,b]}$\quad\mbox{} &
%    $\sigma_{[a,b]}$\ \mbox{} \\ \hline \\[-2ex]
%    21.98 & 121.98 & -0.00012 & 0.135 \\
%    121.98 & 221.98 & -0.00029 & 0.142 \\
%    221.98 & 321.98 & 0.00031 & 0.144 \\
%    321.98 & 421.98 & 0.00015 & 0.142 \\
%    421.98 & 521.98 & -0.00009 & 0.143 \\
%    521.98 & 621.98 & 0.00024 & 0.138 \\
%    621.98 & 721.98 & -0.00015 & 0.140 \\
%    721.98 & 821.98 & 0.00014 & 0.140 \\
%    821.98 & 921.98 & 0.00022 & 0.139 \\
%    921.98 & 1021.98 & -0.00010 & 0.141 \\
%    1021.98 & 1121.98 & -0.00013 & 0.139 \\
%    1121.98 & 1221.98 & 0.00018 & 0.138 \\
%    1221.98 & 1321.98 & -0.00009 & 0.140 \\
%    1321.98 & 1421.98 & -0.00026 & 0.138 \\
%    \hline \\[-2ex] 0.00 & 1421.98 & -0.00001 & 0.140 \\ \hline \\
%  \end{tabular}
%  \caption{\label{tab:sigma}The mean $\mu_{[a,b]}$ and the standard deviation
%    $\sigma_{[a,b]}$ of $\frac{\log\log t}{\sqrt t}S(t)$ as $t$
%    varies through $[a,b]$.}
%\end{table}

By analogy with the law of the iterated logarithm, one might expect the
correct scalings for the distribution and extreme values to differ by a
factor of $\sqrt{\log\log t}$.
However, for the $S(t)$ of Riemann zeta, it's thought
\cite{FarmerGonekHughes2007} that they differ by about $\sqrt{\log t}$,
which is quite a bit larger.
We expect that the difference between the $S(t)$ of Maass form eigenvalues
and that of Riemann zeros comes from the level statistics.
The eigenvalues of the Laplacian on arithmetic surfaces
are expected to be Poisson distributed
\cite{BogomolnyGeorgeotGiannoniSchmit1992,BolteSteilSteiner1992},
whereas the Riemann zeros are expected to lie on the critical line and
show a distribution in resemblance to the eigenvalues of random matrices
of the Gaussian unitary ensemble \cite{Montgomery1973,RudnickSarnak1996}.

We believe that the iterated logarithm heuristic is closer to the truth.
If the Weyl remainder would result from a Wiener process, it would
strictly follow the law of the iterated logarithm and we would have
\begin{align*}
  \limsup_{t\to\infty}\Big(\frac{\log\log t}{2t}\Big)^{1/2}|S(t)|
  =\sigma.
\end{align*}
We have checked this numerically and find that the predicted scaling
of extreme values agrees with our numerical data.
The scaling also agrees with the upper bound of Conjecture
\ref{conj:upper-bound}.
However, the numerical values of the extrema are larger than predicted
by a factor of $2.5$, i.e.\ we find
\begin{align*}
  \limsup_{t\to\infty}\Big(\frac{\log\log t}{2t}\Big)^{1/2}|S(t)|
  \approx2.5\sigma,
\end{align*}
which indicates that the fluctuations in the Weyl remainder do not
exactly follow the law of the iterated lagarithm.

\end{document}